\theoremstyle{plain}
\newtheorem{theorem}{Theorem}[section]
\newtheorem{lemma}[theorem]{Lemma}
\newtheorem{proposition}[theorem]{Proposition}
\theoremstyle{definition}
\newtheorem{definition}[theorem]{Definition}
\theoremstyle{remark}
\newtheorem{remark}[theorem]{Remark}
\newtheorem*{properties}{Properties}
\newcommand{\C}{\mathbb{C}}
\begin{document} 

\title{Symplectic embeddings of $4$--manifolds
via Lefschetz fibrations}


%

\author{Dishant M. Pancholi}
\address{ The Institute of Mathematical Sciences\\
           No. 8, CIT Campus, Taramani, \\
 	   Chennai 600113, India}
 \email{dishant@math.tifr.res.in}
 \author{Fransico Presas}
 \address{ICMAT, Madrid, Spain}
 \email{fpresas@icmat.es}

\date{\today}

\subjclass{Primary ; Secondary }

\begin{abstract}
In this article we study proper  symplectic and iso-symplectic  embeddings of $4$--manifolds in $6$--manifolds. We show that a closed orientable smooth $4$--manifold admitting a Lefschetz fibration  over $\C P^1$ 
admits a symplectic embedding in the symplectic manifold $(\C P^1 \times 
\C P^1 \times \C P^1, \omega_{pr}),$ where $\omega_{pr}$ is the product 
symplectic form on $\C P^1 \times \C P^1 \times \C P^1.$ We also show that there exists a sub-critical Weinstein $6$--manifold  in which all 
finite type Weinstein $4$--manifolds  admit iso-symplectic embeddings. 

\end{abstract}
\maketitle

\section{Introduction}\label{sec:introduction}
The study of embeddings of manifolds has a long and 
fascinating  history. Many important techniques
essential  for the study of geometric topology 
originated from the study embeddings of manifolds. 
To name a few, (1) H. Whiteny's famous trick~\cite{Wh} was discovered by him to establish 
embeddings of $n$--dimensional manifolds in 
$\mathbb{R}^{2n},$ (2)  Nash's $C^1$--isometric 
embedding theorem~\cite{Na} -- which can be regarded as precursor to the phenomenon of $h$--principle
discovered and popularized by M. Gromov~\cite{Gr}, and (3) 
Kodiara's embedding theorem~\cite{Ko}  which characterizes which complex manifolds are projective. 

In this article we discuss proper symplectic and iso-symplectic embeddings of manifolds. 
Recall that by a symplectic embedding of an even dimensional manifold $M$ in a  symplectic manifold $(W, \omega_W),$
we mean a proper smooth embedding of $M$ in $W$ such that
the pull-back of the symplectic form $\omega_W$ via the 
embedding induces a symplectic structure on $M.$  In case
$M$ is symplectic with symplectic form $\omega_M,$ and the pull back $f^*(\omega_W)$ via an embedding $f$ of $M$ in $W$ is the form $\omega_M,$ then we say that $f$ is an iso-symplectic embedding of $(M, \omega_M)$
in $(W, \omega_W).$

The study of symplectic and iso-symplectic  embeddings of  symplectic manifolds in a given symplectic manifold was initiated by M. Gromov.
Gromov showed that the question of iso-symplectic and symplectic embeddings
of symplectic manifolds  abides by the $h$-principle provided the co-dimension of the embedding is at least $4.$   In case, $(M, \omega_M)$ is an open manifold,  Gromov showed
that the iso-symplectic and symplectic embedding problem abides by the
$h$--principle even in the case when the  co-dimension of embeddings under consideration is $2.$ Apart form these earlier works of Gromov, a
major break through in producing symplectic sub-manifolds of a closed symplectic manifold 
came through the works of Donaldson~\cite{Do1}.
Donaldson, using his approximately holomorphic technique, 
provided many symplectic sub-manifolds of a given
closed symplectic manifold. In particular, he 
produce closed symplectic sub-manifolds in co-dimension $2.$ Donaldson's technique, though very powerful and extremely useful, does not provide any insight into the question finding which manifold might occur as a co-dimension $2$ symplectic sub-manifold of a given symplectic manifold.

In the present article we try to address this question. We will  focus on   
symplectic  embeddings  of closed $4$--manifolds in co-dimension $2,$ and    proper symplectic as well as  iso-symplectic embeddings of  Weinstein manifolds in co-dimension $2.$  

Let us now proceed towards stating  precisely  statements of  main results. It is well known that a large class of symplectic manifolds admit \emph{Lefschetz fibrations}. In this article 
we will  discuss symplectic and iso-symplectic embedding of manifolds admitting Lefschetz fibration. We 
 will also mostly focus on embeddings of $4$--manifolds 
 in $6$--dimensional symplectic manifolds occasionally pointing out the places where  there is a possibility of generalisations   to higher dimensions. 
 We begin by first discussing statements regarding symplectic embeddings of closed manifolds.

\subsection{Embeddings of closed manifolds}\mbox{}
 
Let us  recall few notions related to Lefschetz fibrations. 

 \begin{definition}[Lefschetz fibration]\label{def:lef_fib} 
Let $M$ be an oriented  $2n$--dimensional compact  manifold,  possibly with a non-empty 
boundary and corners.  By a Lefschetz fibration on $M,$ we mean a map $f: M \rightarrow \mathcal{S},$
where $\mathcal{S}$ is either $\C P^1$ or a  closed unit $2$--disk  
$\mathbb{D}^2,$ which satisfies the following property.

For every  $x$ at which  the map $f$ is singular, there exists an orientation preserving
parameterization  $\phi: U \subset M \rightarrow \mathbb{C}^n,$ and an orientation preserving 
parameterization $\psi: V \subset \mathcal{S} \rightarrow \mathbb{C}$ such that
the following properties are satisfied:

\begin{enumerate}
\item $x \in U,$ and $\phi(x) = (0,\cdots, 0) \in \mathbb{C}^n.$
\item $f(x) \in V,$ and $\psi(f(x)) = 0 \in \mathbb{C}.$
\item  For the map $g:\mathbb{C}^n \rightarrow \mathbb{C}$ given by 
$g(z_1, z_2, \cdots , z_n) = z_1^2 +z_2^2 + \cdots + z_n^2,$ the following diagram commutes:

\begin{center}

\begin{tikzcd}
 U \subset M \arrow[r, "\phi"] \arrow[d, "f"] &  \mathbb{C}^2  \arrow[d, "g"]  \\
V \subset \mathcal{S} \arrow[r,  "\psi" ] & \mathbb{C}. 
\end{tikzcd}
\end{center}
\end{enumerate}

\end{definition}

It is well known that given a Lefschetz fibration, a fiber containing a singular point is obtained from a nearby fiber $F$ by collapsing an embedded $n$--sphere $\mathbb{S}$ to a point. The sphere $\mathbb{S}$
is called a vanishing cycle. When $\mathbb{S}$ is homologically non-trivial, we say that $\mathbb{S}$ is an essential cycle.

 A Lefschetz fibration $\pi_M:M \rightarrow \Sigma,$  on $M$ which satisfies: 
(1) the map $\pi_M$ restricted to the set of critical 
points is injective, (2) every vanishing cycle is essential is also known as \emph{simplified Lefschetz fibration}~\cite{BO}. Most Lefschetz fibrations that one generally encounter satisfy these properties. Further, in most cases, given a manifold $M$ admitting a Lefschetz fibration which
is not simplified, it is possible to produce another Lefschetz fibration on $M$ which is simplified. For this reason, we will only focus on simplified Lefschetz fibration.  Hence from now on, by a  Lefschetz fibration, we will always mean a simplified Lefschetz fibration.

We use Lefschetz fibrations to produce embeddings,
for this we need to   recall the notion of Lefschetz fibration embedding. This notion was discussed in~\cite{GPan}.

\begin{definition}
 Let $\pi_M: M \rightarrow \mathcal{S}$ and $\pi_N:N \rightarrow \mathcal{S}$ be two Lefschetz fibrations. 
 An embedding $\phi: M \hookrightarrow N$ is said to be a Lefschetz fibration embedding of $M$ in $N,$ provided the following diagram commutes:

\begin{center}

\begin{tikzcd}
 M \arrow[r, "\phi"] \arrow[d, "\pi_M"] &  N  \arrow[d, "\pi_N"]  \\
\mathcal{S}\arrow[r,  "Id" ] & \mathcal{S}. 
\end{tikzcd}
\end{center}

\end{definition}

We fix the following convention: 

For the ease of 
notations in this article
article we will  denote by $\mathcal{P}$ the manifold
$\C P^1 \times \C P^1 \times \C P^1$ and by $\pi_i: 
\mathcal{P} \rightarrow \C P^1$ the projection on the
$i$'th factor, for $i \in \{1,2,3\}.$  On the manifold $\mathcal{P}$ there is a natural
symplectic form defined as $\pi_1^{*} (\omega_{FS}) + 
\pi_2^{*}(\omega_{FS}) + \pi_3^{*}(\omega_{FS}),$ where 
$\omega_{FS}$ is the standard Fubini-Study symplectic form on $\C P^1.$ Let us denote 
this symplectic form by $\omega_{pr}.$ The map 
$\pi_3: \mathcal{P} \rightarrow \C P^1$ is clearly trivial symplectic fibration 
having fiber symplectomorphic to the symplectic
manifold $(\C P^1 \times \C P^1, \pi_1^* \omega_{FS} + \pi_2^* \omega_{FS}).$ 
We will denote the symplectic manifold 
$\C P^1 \times \C P^1$ with symplectic form $\pi_1^* \omega_{FS} + \pi_2^* \omega_{FS}$ by $(\C P^1 \times \C P^1, \omega_{FS} + \omega_{FS}).$

\begin{theorem}\label{thm:symp_lef_embedding}
Let $M$ be a closed orientable $4$--manifold, and let $\pi_M: M \rightarrow \C P^1$ be a Lefschetz fibration. 
There exists a Lefschetz fibration embedding of $M$ in
$(\mathcal{P}, \omega_{pr})$ which satisfies the following:

\begin{enumerate}
 \item the embedding is symplectic,
 \item any smooth fiber of the fibration $\pi_M: M \rightarrow \C P^1$ is symplectic sub-manifold of a fiber of $\pi_3.$
\end{enumerate}

\end{theorem}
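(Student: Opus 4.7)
The strategy is to realize $\pi_M\co M \to \C P^1$ as a family of (possibly nodal) symplectic curves inside $\C P^1 \times \C P^1$, parametrized by the base $\C P^1$. Concretely, the plan is to produce a smooth map $F\co M \to \C P^1 \times \C P^1$ whose restriction to each fiber of $\pi_M$ is a (possibly nodal) symplectic embedding; the desired Lefschetz fibration embedding is then $\phi := (F, \pi_M)\co M \to \mathcal{P}$. Once $F$ is built, conclusion~(2) is immediate, the vertical block of $\phi^*\omega_{pr}$ is symplectic by construction, and conclusion~(1) follows after rescaling the Fubini--Study form on the third factor so that $\pi_M^*\omega_{FS}$ dominates the horizontal contribution of the first two factors.

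The construction of $F$ proceeds in three steps. First, embed a generic fiber $\Sigma$ of $\pi_M$ (a closed orientable surface of genus $g$) symplectically into $(\C P^1 \times \C P^1, \omega_{FS}+\omega_{FS})$ as a smooth algebraic curve of bidegree $(a,b)$ with $(a-1)(b-1)=g$, taking $(a,b)$ large enough that every embedded loop in $\Sigma$ is realizable as a vanishing cycle for a degeneration to a nodal bidegree-$(a,b)$ curve. Second, letting $p_1, \dots, p_k \in \C P^1$ be the critical values of $\pi_M$ and $D_i$ small disjoint disks around them, define $F$ on $\pi_M^{-1}(\C P^1 \setminus \bigcup_i D_i)$ by isotoping the initial embedding of $\Sigma$ according to the monodromy of $\pi_M$, where each Dehn twist in the monodromy is realized as an ambient symplectic isotopy of $\C P^1 \times \C P^1$ preserving the image of $\Sigma$ setwise. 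Third, near each critical point use the Lefschetz normal form $g(z_1,z_2) = z_1^2+z_2^2$ and extend $F$ to $\pi_M^{-1}(D_i)$ via a local model in which the Milnor fibers embed as nearby smooth symplectic curves of bidegree $(a,b)$ degenerating at $t=0$ to a nodal one, with the vanishing cycle appearing as the collapsing Lagrangian $S^2$.

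The main obstacle is Steps~2 and~3 taken together: showing that the monodromy of $\pi_M$---a product of right-handed Dehn twists along the vanishing cycles $\mathbb{S}_i \subset \Sigma$---can be realized by ambient symplectic isotopies of $(\C P^1 \times \C P^1, \omega_{FS}+\omega_{FS})$ preserving the image of $\Sigma$ setwise, and that the resulting isotopies glue consistently with the local nodal models at the critical values. The key flexibility input is the freedom to choose the bidegree $(a,b)$ of the initial embedding large enough, so that every embedded loop in $\Sigma$ arises as a Lagrangian vanishing cycle for some one-parameter degeneration of the embedded curve inside $\C P^1 \times \C P^1$. All remaining steps (the initial fiber embedding, the local Lefschetz model, and the final symplectic verification) are either standard or follow routinely once this flexibility is established.
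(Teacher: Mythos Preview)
Your plan follows the same architecture as the paper's proof: embed the generic fiber symplectically in $\C P^1\times\C P^1$, realize each monodromy Dehn twist by an ambient symplectic isotopy (the paper calls this \emph{symplectic flexibility} of the embedding, Definition~\ref{def:flexible_embedding}), model a neighborhood of each singular fiber on a nodal degeneration inside $\C P^1\times\C P^1$, and assemble. The main difference in the flexibility step is that you propose taking the bidegree $(a,b)$ large enough that \emph{every} embedded loop in $\Sigma$ arises as a vanishing cycle, a strong statement you do not justify. The paper instead works with the fixed bidegree $(2,g+1)$ and proves flexibility constructively (Lemma~\ref{lem:symp_flex_embed_surface}) by exhibiting the Humphries generators of $\mathcal{M}CG(\Sigma_g)$ as vanishing cycles of copies of the $(2,2)$ elliptic pencil sitting inside the $(2,g+1)$-curve; since these generate the mapping class group, every Dehn twist is then conjugated by an ambient symplectic isotopy (via Lemma~\ref{lem:ambient_twist_along_vanishing_cycle}).

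Two points in your outline need real work. First, rescaling $\omega_{FS}$ on the third factor changes the target form and so does not prove the theorem for the stated $\omega_{pr}$. The paper avoids this Thurston-type rescaling: its local embedding near a singular fiber (Step~1 of the proof) is the graph of the holomorphic $(2,g+1)$-pencil on $\C P^1\times\C P^1$ followed by the blow-down, so the pullback of $\omega_{pr}$ is already symplectic; away from the singular fibers the embedding is a product and the patchings are by ambient symplectomorphisms of $\C P^1\times\C P^1$. Second, there is a closing-up obstruction your Steps~2--3 do not address. After you realize $\tau_1,\dots,\tau_{k-1}$ by the local-model isotopies, the isotopy seen around the last boundary $\partial D_k$ is forced to be their composite, and you must match it with the nodal model there; the discrepancy is a loop in the space of symplectic embeddings of $\Sigma_g$. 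The paper (Step~4) reorganizes the construction so that a single complementary disk remains and then invokes the triviality of $\pi_1$ of the identity component of the symplectomorphism group of $\Sigma_g$ for $g\ge 2$ to fill it. Without this input your plan does not close up, and in particular the genus-one case would need separate treatment.
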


Since its introduction in symplectic geometry in  the seminal
article~\cite{Do} by S. K. Donaldson, the notion of Lefschetz fibration has become extremely 
important in symplectic topology. An almost immediate 
consequence of \cite[Theorem 2]{Do} --  which provides the existence of \emph{Lefschetz pencil} on symplectic manifolds -- and Theorem~\ref{thm:symp_lef_embedding} is the following:

 \begin{theorem}\label{thm:embed_upto_blowup}
  Let $(M,\omega)$ be a closed $4$--dimensional symplectic manifold. After a finite number of blow-up there exist a symplectic embedding of the blown-up manifold $\mathcal{B}M$ in symplectic manifold $(\C P^1 \times \C P^1 \times \C P^1, \omega_{pr}),$ where $\omega_{pr}$ is the
  product symplectic form on $\C P^1 \times \C P^1 \times \C P^1.$
 \end{theorem}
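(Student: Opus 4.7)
The plan is to reduce Theorem~\ref{thm:embed_upto_blowup} almost immediately to Theorem~\ref{thm:symp_lef_embedding} by using Donaldson's construction of Lefschetz pencils on closed symplectic $4$-manifolds. First, I would invoke \cite[Theorem 2]{Do} to produce a symplectic Lefschetz pencil $f \co M \setminus B \rightarrow \C P^1$ on $(M,\omega)$, where $B \subset M$ is a finite set of base points. Near each point of $B$ the pencil $f$ has, up to symplectomorphism, the local model of the projectivization map $\C^2 \setminus \{0\} \to \C P^1$, which is the defining feature of a pencil.

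Second, I would blow up $(M,\omega)$ symplectically at each base point $p \in B$. Write $\mathcal{B}M$ for the resulting symplectic $4$-manifold. Because the pencil near each $p \in B$ is given in local coordinates by the Hopf-type map $(z_1,z_2) \mapsto [z_1:z_2]$, blowing up resolves this indeterminacy and extends $f$ to a well-defined map
\[
\pi_{\mathcal{B}M} \co \mathcal{B}M \longrightarrow \C P^1,
\]
which is a Lefschetz fibration in the sense of Definition~\ref{def:lef_fib}. (The exceptional divisors produced by the blow-ups become sections of this fibration.)

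Third, I would ensure that $\pi_{\mathcal{B}M}$ may be taken to be a \emph{simplified} Lefschetz fibration, since Theorem~\ref{thm:symp_lef_embedding} is stated under the standing convention that Lefschetz fibrations are simplified. As noted in the discussion preceding Theorem~\ref{thm:symp_lef_embedding}, one can always replace a Lefschetz fibration by a simplified one: a small perturbation of $\pi_{\mathcal{B}M}$ in a neighborhood of its critical values separates critical points to distinct fibers, and any homologically trivial vanishing cycle can be removed by the standard modification of the Lefschetz fibration, possibly at the cost of further blow-ups (which we absorb into $\mathcal{B}M$).

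Finally, I would apply Theorem~\ref{thm:symp_lef_embedding} to $\pi_{\mathcal{B}M} \co \mathcal{B}M \to \C P^1$. This produces a Lefschetz fibration embedding $\phi \co \mathcal{B}M \hookrightarrow \mathcal{P}$ such that $\phi^{*}\omega_{pr}$ is a symplectic form on $\mathcal{B}M$, which is exactly the conclusion of the theorem. The only real obstacle in this outline is the verification that the Lefschetz fibration arising from blowing up Donaldson's pencil can be arranged to be simplified; this is essentially a standard consequence of the flexibility of the pencil construction together with small perturbations of the fibration structure, and does not introduce new ideas beyond those already in the paper's framework.
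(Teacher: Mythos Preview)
Your proposal is correct and follows essentially the same route as the paper: obtain a simplified Lefschetz fibration on a blow-up of $(M,\omega)$ and then apply Theorem~\ref{thm:symp_lef_embedding}. The only minor difference is that the paper packages the passage to a \emph{simplified} fibration in a single citation (Theorem~\ref{thm:Donaldson}, combining Donaldson with \cite{AMP04}), whereas your ad hoc remark about ``removing'' homologically trivial vanishing cycles via further blow-ups is not quite the correct mechanism---the actual point, as in \cite{AMP04}, is that for pencils of sufficiently high degree all vanishing cycles are symplectically isotopic and hence simultaneously essential.
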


\subsection{Embeddings of Weinstein manifolds}\mbox{}
 
Next, we apply our method of studying embedding via 
Lefschetz fibration to the case of Weinstein 
manifolds. Recall that a Weinstein manifold $W$ is a manifold admitting a triple $(\omega, \phi, X),$ 
where $\omega$ is a symplectic structure on $W,$   $\phi$ is a exhausting Morse function for $W$,
and $X$ is a complete vector field on $W$ which is exhaustive for $\phi$ and Liouville for $\omega.$ The triple $(\omega, \phi, X)$ is known
as a Weinstein structure on $W.$
 In this article we will
only deal with Weinstein manifolds that  admit  exhaustive Morse function with only finite number of critical points. Recall that  such
Weinstein manifolds are known as  \emph{finite type} Weinstein manifolds. Finally recall that a Weinstein manifold $V^{2n}$
of  dimension $2n$
admitting a exhaustive Morse function having critical
points of index at most $\frac{n}{2} - 1$ is known
as a sub-critical Weinstein manifold. 

  Our main result
related to iso-symplectic embeddings of Weinstein manifolds is the following:

\begin{theorem}\label{thm:weinstein_embedding}
   Let $\mathcal{L}(-3)$ denote the complex line bundle over $\C P^1$ with Chern class $-3.$ The manifold $L(-3) \times \mathbb{C} $ admits a 
  Weinstein structure $(\omega_U,  X, \phi)$ which satisfies the following property. 
  
  If $(V, \phi, X)$ is a Weinstein manifold of dimension $4$ satisfying the property that $\phi$ has only finite number of critical points, then 
  there exists an  iso-symplectic embedding of $(V, X , \phi)$ in  $(L(-3) \times \mathbb{C}, \omega_U, \phi, X).$ 
\end{theorem}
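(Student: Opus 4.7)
The plan is to mirror the strategy of Theorem~\ref{thm:symp_lef_embedding} in the Weinstein (non-compact) setting, with the base $\C P^1$ replaced by $\mathbb{C}$ and the fiber $\C P^1 \times \C P^1$ of $\pi_3$ replaced by $L(-3)$. The first preparatory step is to obtain a Weinstein Lefschetz fibration $\pi_V\colon V \to \mathbb{C}$ on $V$; such fibrations exist for any finite-type Weinstein $4$-manifold by the work of Akbulut--Ozbagci and Loi--Piergallini (or by Giroux--Pardon in general dimension), and their regular fibers are finite-type Weinstein surfaces $(F, \omega_F)$. The second preparatory step is to construct the target Weinstein structure $(\omega_U, X, \phi)$ on $L(-3) \times \mathbb{C}$ so that the second-factor projection $p_2\colon L(-3) \times \mathbb{C} \to \mathbb{C}$ is a trivial symplectic fibration with fiber $L(-3)$ equipped with an induced symplectic form $\omega_L$. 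The existence of such an ambient Weinstein structure requires care because $L(-3)$ contains the closed $2$-cycle $\C P^1$; one arranges this cycle to be isotropic in $L(-3) \times \mathbb{C}$ so that $\omega_U$ is exact, and then exhibits a Liouville vector field and an exhausting Morse function whose critical points of index at most $2$ place the ambient manifold in the sub-critical Weinstein class.

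The heart of the argument is a universal fiber-embedding lemma: every finite-type Weinstein surface $(F, \omega_F)$ admits an iso-symplectic embedding into $(L(-3), \omega_L)$ with the property that every essential simple closed curve on $F$ bounds an embedded Lagrangian disk in $L(-3)$. I would prove this by a handle-by-handle construction, placing a symplectic disk off the zero section and then attaching Liouville $1$-handles that thread through $L(-3)$, using the $(-3)$-section of $L(-3)$ as a source of homological support for the Lagrangian bounding disks. The iso-symplectic refinement then follows from a Moser argument: since $F$ is an open surface, $H^2(F;\R) = 0$, so any two exact symplectic structures on $F$ with the same Liouville form at infinity are isotopic through exact symplectic forms, allowing the pullback of $\omega_L$ to be deformed to $\omega_F$.

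With the fiber lemma in place, the global embedding is assembled from the two Lefschetz fibrations in direct analogy with Theorem~\ref{thm:symp_lef_embedding}: the fiber embeddings are glued into a smooth family over the regular set of $\pi_V$, and at each Lefschetz critical value the local normal form of Definition~\ref{def:lef_fib} is realized inside a Darboux chart of $L(-3)$ centered on the Lagrangian disk bounded by the corresponding vanishing cycle. A final parametric Moser deformation promotes the result to a genuine iso-symplectic embedding into $(L(-3) \times \mathbb{C}, \omega_U)$. The main obstacle I anticipate is the fiber lemma itself: one must accommodate every finite-type Weinstein surface as an iso-symplectic submanifold of a single fixed ambient Weinstein $4$-manifold, while simultaneously guaranteeing that every possible vanishing cycle bounds a Lagrangian disk there. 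The specific choice of Chern class $-3$ presumably makes this balance achievable, and verifying it in detail---along with the compatibility between the Liouville vector field of $V$ and that of $L(-3) \times \mathbb{C}$ under the embedding---is the main technical ingredient of the proof.
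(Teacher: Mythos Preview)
Your high-level strategy matches the paper's: reduce to a Weinstein Lefschetz fibration $\pi_V:V\to\mathbb{D}^2$ (via Giroux--Pardon and Loi--Piergallini), embed fibers of $\pi_V$ into fibers of the trivial fibration $p_2:L(-3)\times\mathbb{C}\to\mathbb{C}$, then glue across singular fibers using local Lefschetz models. Where you diverge from the paper is in the mechanism for the fiber embedding, and this is where the real content lies.

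Two points. First, your worry about exactness of $\omega_U$ is a red herring. The Stein structure on $\mathcal{L}(-3)$ is standard (the paper simply cites \cite{EL} and \cite{OS}); since it is Stein, its K\"ahler form is automatically exact, so the zero section $\C P^1$ is not symplectic, and no ad hoc ``make it isotropic'' construction is needed. The product with $\mathbb{C}$ is then Stein, hence Weinstein and sub-critical, for free.

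Second, and more substantively, your proposed ``universal fiber-embedding lemma'' asks for too much and is built on the wrong foundation. You want an iso-symplectic embedding $F\hookrightarrow L(-3)$ in which \emph{every} essential simple closed curve bounds an embedded Lagrangian disk, constructed handle by handle. The paper instead uses a single clean input that simultaneously explains the choice of Chern class $-3$: by Ozbagci--Stipsicz, for every $g$ the domain $\mathbb{D}\mathcal{L}(-3)$ admits a Stein Lefschetz fibration $\pi_g$ whose smooth fiber is a genus-$g$ surface with one boundary component and whose vanishing cycles are precisely the Humphries generators $a_i,b_j,c_1,c_2$. The required embedding of $F$ is then just the inclusion of a regular fiber of $\pi_g$. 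Because the Humphries curves are vanishing cycles, Dehn twists along them are conjugated by ambient symplectic isotopies (Lemma~\ref{lem:ambient_twist_along_vanishing_cycle}); since these twists generate the mapping class group of the once-punctured surface, the fiber inclusion is \emph{flexible} in the sense of Definition~\ref{def:flexible_embedding}. That is the correct replacement for your Lagrangian-disk condition, and indeed your condition would only follow \emph{after} establishing flexibility, not before. With flexibility in hand, the paper then runs exactly the gluing argument you outline: each vanishing cycle of $\pi_V$ is carried to the fixed curve $a_1$ by an ambient isotopy (this uses Proposition~\ref{pro:essential_cycles_as_product_of_essential} to avoid the boundary-parallel Dehn twist), neighborhoods of singular fibers are embedded via the diagonal into $\pi_g$, and the pieces are patched along arcs in the base. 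The iso-symplectic upgrade is obtained by scaling the disk bundle radius so that fiber areas match, followed by Moser.
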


We would like to point out that the universal model for iso-symplectic embeddings that we have constructed is not unique. A large class of
Stein manifolds can serve as universal model. See the remark after the proof of Theorem~\ref{thm:weinstein_embedding} at the end of Section~\ref{sec:stein_lef_fib_embedding}. Furthermore, since any Stein manifold has an underlying Weinstein structure, Theorem~\ref{thm:weinstein_embedding}  provides iso-symplectic embeddings of Stein manifolds which are not holomorphic embeddings.

  It was pointed out to us by Prof. Yakov Eliashberg that a complete $h$--principle for iso-symplectic embedding of Weinstein manifold in co-dimension $2$ is well known provided the target manifold is flexible. This follows form the work discussed in \cite{CE}. However, producing an embedding of
  Weinstein manifold which is formally iso-symplectic is generally not very easy. We circumvent this problem by producing explicit iso-symplectic embeddings.

Let us have  few words regarding the arrangement of this article. Essential preliminaries related to Lefschetz fibration and mapping class groups are collected in  Section~\ref{sec:preliminaries}. In Section~\ref{sec:outline} we discuss main ideas involved
in proofs of Theorem~\ref{thm:symp_lef_embedding} and
Theorem~\ref{thm:weinstein_embedding}. The notion of 
\emph{flexible embeddings} of surfaces is discussed in
Section~\ref{sec:flexible_embedding} while sections~\ref{sec:proofs_lfe} and \ref{sec:stein_lef_fib_embedding} deal with proof of
Theorem~\ref{thm:symp_lef_embedding} and Theorem~\ref{thm:weinstein_embedding} respectively.

\subsection{Acknowledgments} We are  extremely thankful to Prof. Yakov Eliashberg for his constant encouragement, support, and critical comments. Dishant Pancholi is very thankful to T R Ramadas and Krishna Hanumantu for
various discussions related to this article. F. Presas is supported by the Spanish Research Projects SEV-2015-0554, MTM2016-79400-P, and MTM2015-72876-EXP.

\section{Preliminaries }\label{sec:preliminaries}

We start this section by recalling some results
related to Lefschetz fibrations.

\subsection{Lefschetz fibration}\mbox{}

 In this sub-section we recall two theorems about Lefschetz fibrations. First of which is due  S. K. Donaldson~\cite[Theorem:2]{Do} and
 J.Amor\'s, V. Mu\~noz  and F. Presas~\cite{AMP04},  which roughly says that every symplectic manifold, after finite number of blow-up admits a 
Lefschetz fibration structure. The second one is
due to R. Gompf which --in certain  sense -- establishes 
the converse of Donaldson's result providing the existence of symplectic structures on Lefschetz fibrations. 

\begin{theorem}[S. Donaldson~\cite{Do},  J. Amor\'os, V. Mu\~noz, and F.Presas~\cite{AMP04}]\label{thm:Donaldson}
 Let $(M, \omega)$ be a closed symplectic manifold. After
 finite number of blow-ups of $M,$ there exists a 
Lefschetz fibration structure on the blown-up manifold $\widetilde{M}$ which satisfies the property that regular fiber of this fibration is a symplectic sub-manifold of $\widetilde{M}.$  Furthermore, we can always find a Lefschetz fibration on $\widetilde{M}$ which is simplified.
\end{theorem}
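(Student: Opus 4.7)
The plan is to follow S.~K. Donaldson's approximately holomorphic techniques together with the refinements of Amor\'os--Mu\~noz--Presas. After rescaling $\omega$ so that $[\omega] \in H^2(M; \Z)$, choose a compatible almost complex structure $J$, and let $L \to M$ be a Hermitian line bundle with connection of curvature $-i \omega$, so that $c_1(L) = [\omega]$. The main analytic objects will be the tensor powers $L^{\otimes k}$, equipped with the rescaled metric $g_k = k g$, in which the curvature of $L^{\otimes k}$ has uniformly bounded norm and one can work with Gaussian peak sections centred at every point of $M$.

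The core of the argument is Donaldson's construction of an \emph{approximately holomorphic pair} $(s_k^0, s_k^1)$ of sections of $L^{\otimes k}$: sections whose $\bar{\del}$-part is bounded in $C^1$ by $O(k^{-1/2})$ in the rescaled metric and whose $C^1$ norm is of order $1$, to which one then applies Donaldson's quantitative transversality theorem. The conclusion is that for $k \gg 0$ the ratio $\varphi_k = [s_k^0 : s_k^1]$ defines a Lefschetz pencil on $M$: the base locus $B = \{s_k^0 = s_k^1 = 0\}$ is a smooth symplectic submanifold of codimension $4$, the critical points of $\varphi_k$ are non-degenerate and admit the complex quadratic local model of Definition~\ref{def:lef_fib}, and the regular fibers are symplectic submanifolds of $M$.

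Blowing up $M$ along $B$ produces a smooth manifold $\widetilde M$ together with a genuine map $\widetilde\varphi_k \co \widetilde M \to \C P^1$ whose exceptional divisors are sections of $\widetilde\varphi_k$; a Gompf-type inflation argument along the exceptional locus yields a symplectic form on $\widetilde M$ for which the fibers remain symplectic, so $\widetilde\varphi_k$ is a symplectic Lefschetz fibration with symplectic regular fibers, as required.

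Finally, to upgrade the fibration to a \emph{simplified} one must arrange that (i) distinct critical points lie over distinct critical values and (ii) every vanishing cycle is homologically essential in its fiber. Point (i) is achieved by a $C^\infty$-small generic perturbation of $\widetilde\varphi_k$ near its critical set, which preserves the symplectic property by Moser stability. Point (ii) is the content of the Amor\'os--Mu\~noz--Presas refinement: a more careful choice of the approximately holomorphic pair, combined with an analysis of the local model near each critical value, rules out vanishing cycles that bound disks in a fiber. The principal obstacle throughout is Donaldson's quantitative transversality theorem underlying the whole scheme: the required perturbation estimates for $(s_k^0, s_k^1)$ rely on localised $\bar{\del}$-estimates near each point, an Alexandrov-type lemma controlling the zero locus of a polynomial perturbation, and a globalisation argument over a covering of $M$ by balls of radius comparable to $k^{-1/2}$.
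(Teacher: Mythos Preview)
Your proposal follows essentially the same route as the paper's own justification: invoke Donaldson's existence of symplectic Lefschetz pencils via approximately holomorphic sections, blow up the base locus to obtain a genuine fibration, and then appeal to Amor\'os--Mu\~noz--Presas for the simplified property. The paper does not give a self-contained proof but simply cites \cite[Theorem~2]{Do} and \cite[Theorem~1.3]{AMP04}.

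One point worth correcting: your description of how \cite{AMP04} yields essentiality of vanishing cycles is not quite the mechanism the paper (and the cited reference) uses. The paper invokes \cite[Theorem~1.3]{AMP04} in the form ``any two vanishing cycles of the pencil are related by a symplectic isotopy of the fiber,'' so that if \emph{one} vanishing cycle is homologically essential then all of them are; essentiality of at least one cycle then follows for pencils induced by sufficiently high powers $L^{\otimes k}$. This is different from your phrasing, which suggests that a more careful choice of the pair $(s_k^0, s_k^1)$ directly excludes inessential cycles. Also, ``essential'' here means homologically non-trivial in the fiber, not ``does not bound a disk'' --- on a closed surface a null-homologous simple closed curve is separating, not disk-bounding.
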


We would like to mention here that Theorem~\ref{thm:Donaldson} is actually not stated in the form mentioned here but it is an easy consequence of
Theorem 2 of~\cite{Do}, and  \cite[Theorem 1.3]{AMP04}. Theorem 2 of \cite{Do} provides us with the existence of Lefschetz pencils, 
while  \cite[Theorem 1.3]{AMP04} shows   that
given a pair of vanishing cycles $c_1$ and $c_2$ 
associated to a symplectic Lefschetz fibration, there exists a symplectic  isotopy of the symplectic manifold which send $c_1$ to $c_2.$ This implies that even if one vanishing cycle is essential all vanishing cycles are essential.  This, in particular, implies that pencil induced by large degree line bundles satisfy
the property that all its vanishing cycles are essential. 

Finally let us  state a result due to R. Gompf~\cite[Theorem:10.2.18]{GS}:

\begin{theorem}[R.Gompf]\label{thm:gompf_existence_symp_form}
Let $X^{2n}$ be a smooth manifold that admits a Lefschetz
fibration $\pi: X \rightarrow \mathcal{S}$ with fiber $F.$ Then $X$ admits a symplectic structure on $\omega$ for which the  fiber $F$ is symplectic if and only if $[F]$ in  $H^2(X,\mathbb{Z})$ is non-zero. Furthermore, if $\{e_1,$ $e_2, \cdots e_n\}$ is a finite collection of sections of the Lefschetz fibration, then the symplectic structure  can be assumed to be such that each one of these sections is symplectic.
\end{theorem}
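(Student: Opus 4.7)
The plan is to prove the two directions separately. For necessity, assume $X$ carries a symplectic form $\omega$ with $F$ symplectic. Then $\int_F \omega > 0$, so $\omega$ pairs nontrivially with $[F]$, which forces the Poincar\'e dual class of $[F]$ (regarded in $H^2(X,\mathbb{Z})$ after tensoring with $\mathbb{R}$) to be nonzero. This direction is essentially immediate and sets the cohomological obstruction.

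For sufficiency, the strategy is a Thurston-style patching argument adapted to the presence of Lefschetz singularities. First I would produce a closed $2$-form $\alpha$ on $X$ whose restriction to every smooth fiber is a positive area form. The hypothesis $[F]\neq 0$ gives a class $a\in H^2(X,\mathbb{R})$ pairing positively with $[F]$. On regular fibers this class is realized by a closed form positive on the fiber, which can be arranged by covering the regular part of the base by small trivialization disks $U_i$ with $\pi^{-1}(U_i)\cong U_i\times F$, picking a fiberwise K\"ahler form on each, and patching with a partition of unity subordinate to the $U_i$ (the discrepancies between overlapping forms differ by exact pieces because the fiber cohomology class is globally realized by a form cohomologous to $\alpha$). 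Near each critical point I would instead take $\alpha$ to agree with the standard K\"ahler form $\tfrac{i}{2}\sum dz_j\wedge d\bar z_j$ pulled back via the local chart $\phi$, which is automatically closed and positive on nearby fibers because the map $z\mapsto z_1^2+\cdots+z_n^2$ has complex (holomorphic) fibers away from $0$ and the critical fiber is a nodal complex variety. The technical nuisance will be choosing cutoff functions so that the transition between the local K\"ahler model and the patched form on the regular locus remains closed and fiberwise-positive, which is where most of the work lives.

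Second, pick an area form $\omega_\mathcal{S}$ on $\mathcal{S}$ and set
\[
\omega_K \;=\; \alpha + K\,\pi^{*}\omega_\mathcal{S}.
\]
This form is closed for all $K$. Nondegeneracy at regular points follows because $T_pX$ splits as $\ker d\pi_p\oplus H_p$ for any horizontal complement $H_p$: $\omega_K$ is nondegenerate on $\ker d\pi_p$ by fiber positivity of $\alpha$, the mixed pairing is bounded, and $K\pi^{*}\omega_\mathcal{S}$ dominates on $H_p$ once $K$ is large. At critical points $\pi^{*}\omega_\mathcal{S}$ vanishes, but our choice of $\alpha$ as the pulled-back flat K\"ahler form is already nondegenerate there, so non-degeneracy is automatic on a neighborhood of the critical set. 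A compactness/continuity argument on the complement then yields a single $K$ that works everywhere.

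Finally, to incorporate the finite collection of sections $\{e_1,\dots,e_n\}$, I would arrange them to be symplectic by a local perturbation of $\alpha$. Each section is a $2$-dimensional submanifold transverse to fibers, so asking $\omega_K|_{e_i}\neq 0$ at each point is an open condition. Near a point of $e_i$ not containing critical values one can add a small exact $2$-form $d\beta_i$ supported in a tubular neighborhood of $e_i$ so that $e_i$ becomes transverse to the symplectic orthogonal of the fiber; for $K$ sufficiently large, the resulting form is still symplectic globally. Making the perturbations on disjoint tubular neighborhoods of the $e_i$ and keeping them small enough to preserve fiber-positivity and to not destroy non-degeneracy near critical points finishes the argument. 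The main obstacle throughout is the simultaneous control of closedness, fiber-positivity, and non-degeneracy across the transition between the local holomorphic model at critical points and the globally patched form, which is delicate but standard.
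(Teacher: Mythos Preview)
The paper does not supply its own proof of this theorem; it is quoted as a known result of Gompf and cited from \cite[Theorem~10.2.18]{GS}. Your outline is the standard Thurston--Gompf argument that appears in that reference: use the cohomological hypothesis to build a closed $2$-form $\alpha$ that is fiberwise symplectic (patching local fiberwise area forms and using the standard K\"ahler model near Lefschetz points), then set $\omega_K=\alpha+K\pi^*\omega_{\mathcal S}$ for $K\gg 0$. So your approach is correct and matches the cited source.

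Two small remarks. First, the result as proved in \cite{GS} is for $4$-manifolds, where fibers are surfaces and the condition ``$[F]\neq 0$ in $H_2$'' automatically yields a class restricting to a positive area class on every fiber; in higher dimensions one needs the stronger hypothesis that some class in $H^2(X;\mathbb R)$ restricts to a symplectic class on each fiber, so your phrase ``a fiberwise K\"ahler form'' is hiding a nontrivial assumption when $2n>4$. Second, your treatment of the sections is more elaborate than necessary: since each $e_i$ is transverse to the fibers, $\pi^*\omega_{\mathcal S}$ restricts to an area form on $e_i$, and hence $\omega_K|_{e_i}=\alpha|_{e_i}+K\,\pi^*\omega_{\mathcal S}|_{e_i}$ is already an area form once $K$ is large enough. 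No exact perturbation supported near the sections is needed.
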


We would like to remark that we are not assuming that
$X$ is a closed manifold in the statement of Theorem~\ref{thm:gompf_existence_symp_form}. This will
be the case provided $\mathcal{S}$ is $\C P^1$. In
case, $\mathcal{S}$ is $\mathbb{D}^2,$ then $X$ 
is a manifold with boundary and corners.

\subsection{Review of symplectic mapping class group}\label{sec:mcg_preliminaries}
\mbox{}

One of the main ingredient for establishing symplectic embeddings 
is the notion of flexible embedding of surfaces. In order to get
flexible embeddings of surfaces, we use some results about mapping class groups of surfaces. Let us review these.

Throughout this article by the mapping class group of
an orientable surface $(\Sigma, \omega)$ we mean the group of
symplectic form  preserving diffeomorphism of $\Sigma$ up to a symplectic isotopy. In case, the boundary of
$\Sigma$ is non-empty, then the mapping class group
consists of all symplectomorphisms which are identity when restricted to the boundary of
$\Sigma$ up to isotopies that are identity when restricted
to the boundary of $\Sigma.$. Furthermore, since the symplectic isotopy class of a particular symplectomorphism is the only thing that
is relevant for this article, the word  symplectomorphism will always mean the isotopy class of the symplectomorphism.

It follows from the works of M. Dehn~\cite{De} and C. Lickorish~\cite{Li} that the mapping class group of a closed orientable 
surface is generated by Dehn twists~\cite[Section 3.1.1]{BM}. 
S. Humphries extended their work to  established the most economical set of generators for the mapping class group 
of an orientable genus $g$ surface. He showed that the mapping class group is
generated by Dehn twists along the curves $a_i,$ $i = 1$ to $g$, $b_j$, $j =1$ to $g_1,$
$c_1,$ and $c_2$ as depicted in Figure~\ref{fig:humphries_generators} provided, $\Sigma$ is a closed orientable  surface of genus $g, g\geq 2.$  

Since we are working with surfaces together with a symplectic form $\omega,$ $\Sigma$ naturally comes 
equipped with an orientation. When we say a positive  
(left handed) Dehn twist~\cite[Section 3.1.1]{BM}, we always mean a positive Dehn
twist with respect to this orientation. The general term
Dehn twist refers to either a positive or  a negative Dehn twist.

\begin{figure}[ht]\label{fig:humphries_generators}
 \begin{center}
\includegraphics[scale=0.7]{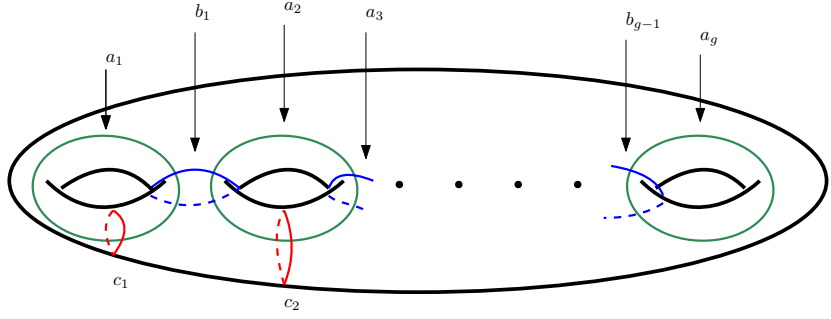}
 \end{center}
\caption{The picture  depicts the set of
Humphries generators for the mapping class group of a closed orientable surface of genus $g.$ The curves $a_i$'s are depicted in green, curves
$b_j$'s are depicted in blue, curves $c_1$ and $c_2$ are
depicted in red.}
\end{figure}

For a surface $(\Sigma, \partial \Sigma)$ having a unique
boundary component as depicted in Figure~\ref{fig:humphries_generators_with_boundary}, the mapping
class group is generated by the same set of Humphries generators together with Dehn twist along boundary parallel
curve $d$ as depicted in Figure~\ref{fig:humphries_generators_with_boundary}.

\begin{figure}[ht]\label{fig:humphries_generators_with_boundary}
 \begin{center}
\includegraphics[scale=0.7]{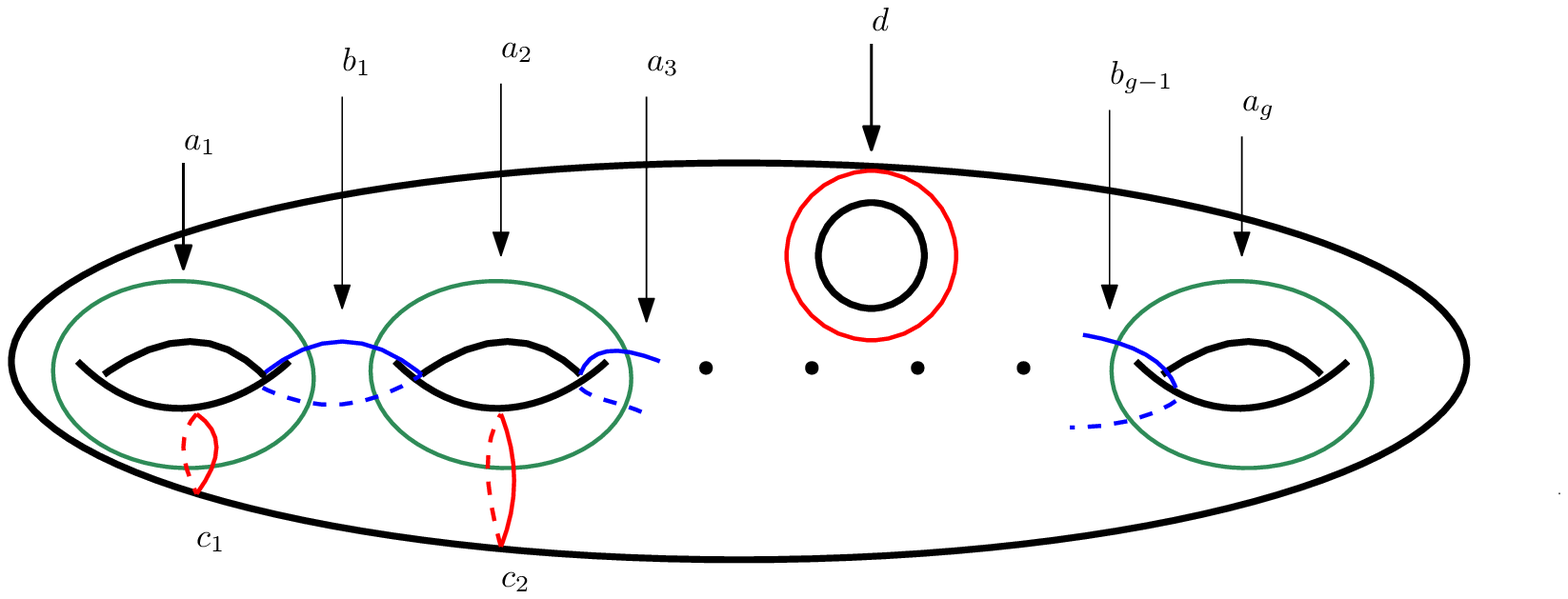}
 \end{center}
\caption{The figure depicts Humphries generators for the mapping class group  of a surface with single boundary component and a surface having exactly one puncture. The mapping class group of punctured surface is generated 
by Dehn twists along $a_i, b_j, c_1,$ and $c_2,$ while 
to generate the mapping class group of the surface
with one boundary component we need an  additional Dehn twist along the curve $d$.}
\end{figure}

Having collected necessary preliminaries, we proceed now
to provide proofs of main results. To make our ideas
accessible, in the next section, we  outline the main ideas
involved in the proofs.

\section{Main ideas involved in proofs
of Theorem~\ref{thm:symp_lef_embedding} and Theorem~\ref{thm:weinstein_embedding}}.\label{sec:outline}

Let us first discuss ideas involved in establishing
Theorem~\ref{thm:symp_lef_embedding}. Given any Lefschetz
fibration $\pi_M:M \rightarrow \C P^1,$ we know that
removing finite number of  singular fibers, we get
a fiber bundle over punctured $\C P^1.$ We also know that
the monodromy around each of this puncture is a positive
Dehn twist. 

Let $F$ denote a fixed smooth fiber of
the Lefschetz fibration $\pi_M:M \rightarrow \C P^1.$
We observe the following: if there exists a symplectic manifold $(W, \omega_W)$ and a symplectic embedding $i: F \hookrightarrow W$  such that
for every symplectomorphism $\psi: F \rightarrow F$ the
embeddings $\psi \circ i$ and $i$ are symplectically isotopic, then the fiber bundle over punctured $\C P^1$
admits a symplectic embedding in $W \times \C P^1.$ 
An embedding satisfying this property is termed as
\emph{symplectically flexible embeddings}. We refer to
Section~\ref{sec:flexible_embedding} for a precise 
definition of symplectically flexible embeddings.

We observe that $(\C P^1 \times \C P^1, \omega_{FS} + \omega_{FS})$ is the symplectic manifold in which 
every genus $g$ surface admits a symplectically flexible embedding. The existence of symplectically flexible
embeddings is discussed in Lemma~\ref{lem:symp_flex_embed_surface}. As remarked
earlier, Lemma~\ref{lem:symp_flex_embed_surface} implies
that there is fiber preserving symplectic embedding
of the Lefschetz fibration $\pi_M:M \rightarrow \C P^1$
restricted to the complement of singular fibers in 
the trivial fibration $\pi_3: \mathcal{P} \rightarrow \C P^1,$ where recall that $\mathcal{P} = \C P^1 \times \C P^1 \times \C P^1$ with product symplectic form and 
$\pi_3$ is the projection on the third factor. 

In order to extend the fiber preserving embedding constructed in previous paragraph to an embedding of
$M$ we turn to local model of Lefschetz critical point.
We observe that there exist a symplectic  embedding
$\phi$of  $\mathbb{C}^2$ in $\mathbb{C}^3$ given by $\phi(z_1, z_2) =
(z_1, z_2, 0)$ such that the following diagram commutes:

\begin{center}

\begin{tikzcd}
 \mathbb{C}^2 \arrow[r, "\phi"] \arrow[d, "f"] &  \mathbb{C}^3  \arrow[d, "g"]  \\
\mathbb{C} \arrow[r,  "id" ] & \mathbb{C}, 
\end{tikzcd}
\end{center}

\noindent where $f(z_1, z_2, z_3) = z_1z_2 + z_3$ and
$g = f \circ \phi .$ Observe that this implies we can 
get embed a neighborhood of Lefschetz singular point in a
standard symplectic ball such that trivial fibration of this
ball to a disk induces the given Lefschetz fibration 
on the neighborhood of Lefschetz singularity. This observation allows us to extend the embeddings obtained
at the end of first step to neighborhoods of singular fibers.  Though we have not written down final argument in
this explicit format, the main idea behind proofs is
this. We refer to~\cite{GPan} for smooth embeddings of all
$4$--manifolds in $\C P^3$ using similar ideas. In fact,
our main observation was that most of the ideas discussed
in~\cite{GPan} can be adopted in symplectic setting.

Let us end this section be briefly discussing main
ideas in the proof of Theorem~\ref{thm:weinstein_embedding}. Theorem~\ref{thm:weinstein_embedding} follows relatively
easily from Theorem~\ref{thm:stein_lef_fib_embedding} where we discuss Stein Lefschetz fibration embedding in 
a fixed Stein domain. 

The ideas involved in producing
Stein Lefschetz fibration embeddings are similar to the
one discussed earlier for the proof of Theorem~\ref{thm:symp_lef_embedding}. It is clear that
one needs to produce a Stein manifold $W$ which 
admits flexible embeddings of Stein $1$--manifolds. We observe that 
the Stein manifold $\mathbb{D}\mathcal{L}(-3)$ has the
property that every Stein $1$--manifold  which is biholomorphic to a  once punctured Riemann surface,  admit a  proper
symplectically flexible embedding in $\mathbb{D}\mathcal{L}(-3).$ See  Lemma~\ref{lem:flex_embedding_with_boundary} for 
a precise statement regarding flexible embeddings in
$\mathbb{D}\mathcal{L}(-3).$ Rest of the proof to establish
Theorem~\ref{thm:stein_lef_fib_embedding} follows
essentially the same logic.

\section{Flexible embeddings of surfaces}\label{sec:flexible_embedding}

We begin by recalling few definitions regarding 
flexible embeddings of surfaces from~\cite{GPan}.

\begin{definition}\label{def:conjugated_element_of_mcg}
 Let $\Sigma$ be an orientable surface. Let $(M,\omega)$ be a symplectic manifold. Let $\Psi: \Sigma_g \hookrightarrow (M, \omega)$ be a symplectic embedding. Let  $f$ be an element of 
 $\mathcal{M}CG(\Sigma, \Psi^{*}(\omega) = \Omega).$  We say that the element $f$  is conjugate via
 the embedding $\Psi$ provided the following properties are satisfied:
 
 \begin{enumerate}
  \item There exits a $1$--parametric family $\phi_t$ of symplectomorphisms of $(M, \omega)$ such that $\phi_0 = id$ and $\phi_1 (\Psi(\Sigma)) = \Sigma.$
  \item $\Psi^{-1} \circ \phi_1 \circ \Psi = f. $
 \end{enumerate}

 \end{definition}

 \begin{definition}[Symplectically flexible embedding]\label{def:flexible_embedding}
  A symplectic  embedding $\Psi$ of an orientable surface $\Sigma$ in a symplectic manifold $(M, \omega)$ is said to be symplectically flexible provided every $f \in \mathcal{M}CG(\Sigma, \Psi^* (\omega))$ is conjugated 
  by a symplectic isotopy of $(M, \omega).$
 \end{definition}

For the sake of brevity, we will refer to a symplectically flexible embedding just by the term \emph{flexible
embedding}. Recall that the term flexible embedding is used in \cite{GP} for smooth embedding 
satisfying properties similar to the define in Definition~\ref{def:flexible_embedding}. Since in this article,
we will only be dealing with symplectically flexible embeddings, we are going to take   slight liberty and refer them as flexible embeddings in the rest of the article unless stated otherwise explicitely. 
 
 Consider a pencil $\pi:M^4 \setminus B \rightarrow \C P^1$
 on a closed symplectic manifold $(M, \omega)$. Let $c$ be a critical
 point for the pencil and let $\nu$ be a vanishing 
 cycle corresponding to the critical point $c.$ Our first
 lemma is regarding the existence of a symplectic isotopy
 of $M$ which conjugates the element $\tau_{\nu},$
 where $\tau_{\nu}$ denotes the positive Dehn twist along
 the curve $\nu.$

 \begin{lemma}\label{lem:ambient_twist_along_vanishing_cycle}
 Let $p:M \setminus B \rightarrow \C P^1,$ $c,$ $\nu,$ and
 $\tau_{\nu}$ be as in the previous paragraph. Let $F$
 be a fiber of pencil with $\nu \subset F.$ Then there
 exists a $1$-parametric family, $\Psi_t,t \in [0,1],$ of symplectomorphisms of $M$  and a $4$--ball 
 embedded in $M$ which satisfies the following:
 
 \begin{enumerate} 
  \item The $4$--ball is symplectomorphic to standard 
  symplectic $4$--ball of some radius $r.$
  \item For each $t,$ $\Psi_t$ is the identity outside
  $B^4$ and $\Psi_0$ is the identity.
  \item The family $\Psi_t$ conjugates $\tau_{\nu}.$
 \end{enumerate}
\end{lemma}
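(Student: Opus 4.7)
The plan is to build $\Psi_t$ locally in a Darboux chart around $c$ from a product Hamiltonian modeled on the fiberwise Dehn twist. Since the pencil is symplectic, Theorem~\ref{thm:Donaldson} gives a Darboux embedding $\Phi\co(B^4(r),\omega_{\text{std}})\hookrightarrow(M,\omega)$ with $\Phi(0)=c$ in which $p\circ\Phi$ takes the Picard--Lefschetz normal form $f(z_1,z_2)=z_1^2+z_2^2$ (up to biholomorphism of the base); set $B:=\Phi(B^4(r))$, so $\nu$ corresponds to the real $S^1=\{x_1^2+x_2^2=\epsilon\}\subset F\cap B$. This $B$ will be the $4$-ball of condition~(1). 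Choose a compact annular neighborhood $U\subset F\cap B$ of $\nu$ with Weinstein coordinates $(q,\xi)\in T^*\nu$ identifying $U$ with a collar of $\nu$; in these coordinates the standard Dehn twist isotopy $\sigma_t(q,\xi)=(q+t\alpha(\xi),\xi)$, with $\alpha$ a bump of integral $2\pi$ supported near $\xi=0$, is generated by the Hamiltonian $h_t(\xi)=t\int_{-\infty}^{\xi}\alpha(s)\,ds$.

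Since $B$ is contractible, the symplectic normal bundle of $F$ in $B$ is trivial, so Weinstein's symplectic neighborhood theorem together with Moser's trick identifies a tubular neighborhood $\nbd(U)\subset B$ with $(U\times D^2_\delta,\,\omega_U+\omega_{\text{std}})$, with $U\times\{0\}=U$. Let $\chi\co U\to[0,1]$ be a smooth cutoff equal to $1$ on the support of $\alpha$ and vanishing near $\partial U$, and let $\eta\co\R_{\ge 0}\to[0,1]$ satisfy $\eta\equiv 1$ near $0$ with $\mathrm{supp}\,\eta\subset[0,\delta^2/2]$. Define
\[
H_t(q,\xi,z):=\chi(\xi)\,h_t(\xi)\,\eta(|z|^2),
\]
so $H_t$ is compactly supported in $\nbd(U)\subset B$; let $\Psi_t$ denote its Hamiltonian flow, extended by the identity outside $\nbd(U)$. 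Under the product decomposition the Hamiltonian vector field splits as $X_{H_t}=\eta(|z|^2)\,X_{\chi h_t}+(\chi h_t)(\xi)\,X_{\eta(|z|^2)}$, and the second summand vanishes along $U\times\{0\}$ because $z=0$ is a critical point of $|z|^2$. Hence $X_{H_t}$ is tangent to $F$ along $F$, so $\Psi_t(F)=F$ for all $t$; this together with the support condition proves (1) and (2).

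On the support of $\alpha$, where $\chi\equiv 1$, the restriction $\Psi_1|_F$ equals $\sigma_1=\tau_\nu$; in the $\xi$-cutoff region (disjoint from $\nu$) the flow introduces an additional shear $S(q,\xi)=(q+\beta(\xi),\xi)$ with $\beta$ smooth and compactly supported in that region. This shear is symplectically isotopic to the identity through the compactly supported family $S_r(q,\xi)=(q+r\beta(\xi),\xi)$, $r\in[0,1]$, each term of which is a symplectomorphism since $S_r^{\ast}(dq\wedge d\xi)=dq\wedge d\xi$. Consequently $[\Psi_1|_F]=[\tau_\nu\circ S]=[\tau_\nu]$ in $\mathcal{M}CG(F,\omega_F|_F)$, verifying condition (3) via Definition~\ref{def:conjugated_element_of_mcg}. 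The main obstacle is precisely the tension between compact support and fiber preservation: the generating Hamiltonian $h_t$ for the Dehn twist is never compactly supported (it is eventually $2\pi t$), which forces the $\xi$-cutoff $\chi$ and produces the shear $S$; the delicate point is recognizing that $S$ is symplectically isotopic to the identity, so it disappears at the level of mapping classes. A secondary technical point is the application of Weinstein plus Moser to produce the product model for $\nbd(U)$ inside $B$, which uses triviality of the normal bundle in the contractible ball.
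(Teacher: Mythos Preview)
Your approach has a genuine gap that cannot be repaired within this framework: the claim that the shear $S$ is isotopic to the identity in $\mathcal{M}CG(F)$ is false, and in fact the entire strategy of keeping $F$ invariant throughout the isotopy is obstructed.

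First, a symptom: your formula $\sigma_t(q,\xi)=(q+t\alpha(\xi),\xi)$ with $\alpha$ a compactly supported bump is \emph{not} the Dehn twist --- it is visibly isotopic to the identity rel $\partial U$ via $\sigma_t$ itself. The Dehn twist is $(q,\xi)\mapsto(q+A(\xi),\xi)$ where $A=\int\alpha$ runs from $0$ to $2\pi$; the correct generating Hamiltonian $h$ satisfies $h'=A$ and therefore grows linearly near one end of $U$.

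Even with the correct $h$, the argument fails for a structural reason. You arrange that $\Psi_t(F)=F$ for every $t$ and that $\Psi_t$ is supported in $\nbd(U)\subset B$; since $F\cap\nbd(U)=U$, the restriction $\Psi_t|_F$ is then a symplectic isotopy of $F$ supported in the interior of the annulus $U$, hence rel $\partial U$. This forces $[\Psi_1|_F]=[\mathrm{id}]$ in $\mathcal{M}CG(F)$, never $[\tau_\nu]$. Concretely, with the cut-off Hamiltonian $\chi h$ one finds $\beta(\xi)=(\chi h)'(\xi)-A(\xi)$, and near the end of $U$ where $A\equiv 2\pi$ and $\chi\to 0$ one gets $\beta\to -2\pi$, not $0$. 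Thus $\beta$ is not compactly supported as a real-valued function, your family $S_r(q,\xi)=(q+r\beta(\xi),\xi)$ does not fix $\partial U$ for $0<r<1$, and so does not extend to an isotopy of $F$. In fact $S$ is the negative Dehn twist $\tau_\nu^{-1}$, confirming $[\Psi_1|_F]=[\tau_\nu\circ\tau_\nu^{-1}]=[\mathrm{id}]$.

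The paper circumvents this by dropping the requirement $\Psi_t(F)=F$ for intermediate $t$; Definition~\ref{def:conjugated_element_of_mcg} only demands $\Psi_1(F)=F$. It exploits the Lefschetz fibration itself: one lifts to $B^4$ a compactly supported rotation of the base disk around the critical value via a symplectic connection. This carries the regular fibre \emph{around} the singular one, and when it returns at time~$1$ it has acquired the monodromy, which is precisely $\tau_\nu$. The lifted flow is then corrected to a genuine symplectic isotopy using Auroux's Proposition~4. This use of monodromy --- letting $F$ move during the isotopy --- is the missing idea in your argument.
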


 \begin{proof}
In order to prove this result, let us first consider an abstract Weinstein Lefschetz fibration $\pi_B:B^4 \rightarrow \mathbb{D}^2,$ where $\mathbb{D}^2$ is 
the closed  disk of radius $\delta$ around $0$ in 
$\mathbb{C},$ which satisfies the following properties:

\begin{enumerate}
 \item The fibration has a unique Lefschetz critical point on the fiber over $0,$ and each 
fiber symplectomorphic to a Weinstein domain $D T^* \mathbb{S}^1,$ where $D T^* \mathbb{S}^1$ denotes a disk bundle associated to the co-tangent bundle $T^* S^1.$

\item The  fibrations  is such that 
the monodromy associated  to the fiber bundle $\pi_B: B^4 \setminus \pi_B^{-1}(\{0\}) \rightarrow 
\mathbb{D}^2 \setminus \{0\}$ is a Dehn twist which
is supported away from a neighborhood of $\partial D 
T^* \mathbb{S}^1.$

\end{enumerate}
\
Consider a  vector-field  $V$ on $\mathbb{D}$ such that
the flow associated to this vector field keeps a small
collar of the boundary $\partial \mathbb{D}$ fixed and rotates the circle of radius less than or equal to $\frac{1}{4} \delta$ by $2\pi$ while rotating the 
circle of radius $\frac{1}{2} \delta$ by $\pi$ as depicted in Figure~\ref{fig:vector_field_on_disk}. The lift of this 
vector field via a symplectic connection on the fibration
produces a  flow that conjugates the monodromy  Dehn twist on the fiber $\pi_B^{-1}\{(\frac{1}{2},0)\}$
Let us denote this flow by $\widetilde{\phi_t}.$ 

Next, construct a properly  embedding  of an annulus $A$ in $B$ which satisfies the following:

\begin{enumerate}
 \item $A = \widetilde{D} T^* \mathbb{S}^1 \subset 
 D T^* \mathbb{S}^1,$ where 
 $\widetilde{D}$ is disk of smaller radius than the
 radius of the disk $D,$ and the disk $\widetilde{D}$ is chosen such that the monodromy Dehn twist 
 associated to the fibration $\pi_B: B \rightarrow \mathbb{D}$ is the identity when restricted to 
 $D T^* \mathbb{S}^1 \setminus \widetilde{D} T^* \mathbb{S}^1.$
 
\item $\partial A =  \pi_B^{-1}(\{0\}) \cap \partial B^4.$
\item The embedding is symplectic.
\end{enumerate}

Observe that the flow $\widetilde{\phi_t}$ conjugates 
the Dehn
twist on $A,$  This implies by enlarging
the vertical boundary of the Weinstein fibration 
$\pi_B: B^4 \rightarrow \mathbb{D},$ we get that
there exists a Weinstein fibration $\pi: \widetilde
{B} \rightarrow \mathbb{D}^2,$ and an embedding of
an annlus $\widetilde{A}$ in $\widetilde{B}$ which satisfies the
following:

\begin{enumerate}
 \item $A \subset \widetilde{A}.$
 \item There exist a family $\widetilde{\phi}_t$
 of diffeomorphism  which conjugates a Dehn twist
 on $\widetilde{A}$
 \item For $\widetilde{\phi}_t= \phi_t,$ when restricted to $B \subset \widetilde{B}.$ 
\end{enumerate}

Till now, we have produce the flow $\widetilde{\phi_t}$
 which need not be a symplectic flow,  however, 
 $\widetilde{\phi_t}^{-1}(\{\frac{1}{2},0\})\}$ is a $1$--parametric  family of symplectic sub-manifolds of $B^4$ all of them
have common boundary embedded in $\partial B^4.$ Hence in light of~\cite{Ar}[Proposition:4] by D. Auroux, we get that there exits a flow
 $\phi_t$ which  preserve the symplectic structure 
 associated to the Weinstein fibration -- i.e., $\phi_t$ is a symplectomorphism
for each $t$ -- $\pi_B: B \rightarrow \mathbb{D},$ and agrees with $\widetilde{\phi_t}$ near the boundary of $B^4.$
 Clearly, $\phi_t$ Observe that the pair $(\widetilde{A}, \phi_t)$ satisfy the following properties:
\begin{enumerate}
 \item $\widetilde{A}$ is a symplectically embedded annulus
 in $B^4$
 symplectomorphic to a unit disk bundle associated to $T^* \mathbb{S}^1.$
 
\item $\phi_t$ is a $1$--parametric family of symplectomorphism of $B^4$ each identity close to $\partial B^4$ and which conjugates the Dehn twist on $\widetilde{A}.$ 
\end{enumerate}

Now, given $p:M \setminus B \rightarrow \mathbb{C} P^1,c, \nu,$ and $\tau_{\nu},$ without loss of 
generality we can assume that $\tau_{\nu}$ is supported
in a small neighborhood of $\nu.$ This implies that
there exists a neighborhood $\mathcal{N}$ of the critical point
$c$ symplectomorphic to some abstract Weinstein 
fibration of the type $\pi_B: B \rightarrow \mathbb{D}^2.$ This clearly implies that there exits a 
family $\widetilde{\Psi}_t$ of symplectomorphisms of supported
in $\mathcal{N}$ which conjugates positive Dehn
twist on an annulus $\mathcal{A}$ embedded in $\mathcal{N}$
which satisfies the property that $\partial A$
is pair of circles on the singular fiber $\pi_M^{-1}
(\{c \}).$ 

Finally, consider the symplectic sub-manifold $\widetilde{F}$  of $M$ which is the union
of $\mathcal{A}$ with $\pi_M^{-1}(\{c\}) \setminus \mathcal{N}.$
Clearly, this is a symplectically embedded 
sub-manifold symplectically isotopic to any smooth fiber $F$ of the pencil $\pi_M: M \setminus B \rightarrow \C P^1.$  Hence, we get that given
a fiber $F$ and $\nu,$ on the fiber $F$ there exists
a family $\Psi_t$ of symplectomorphisms of $M$
which conjugates $\tau_{\nu}.$ Hence the proof.

\end{proof}

\begin{figure}[ht]\label{fig:vector_field_on_disk}
 \begin{center}
\includegraphics[scale=0.8]{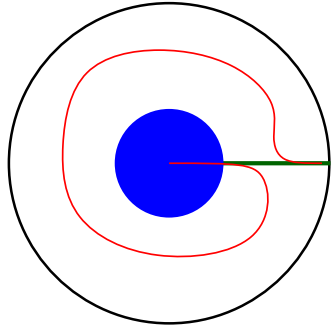}
 \end{center}
\caption{The picture depict the flow assoicated 
to the vector field whose
lift via symplectic connection produces the flow on
$B^4$ which induces positive Dehn twist on the 
fiber over $\frac{1}{2} \delta + i 0.$ The vector field is
such that the associated flow rotates the blue disk  by $2\pi$ while keeping the boundary fixed. The red curve depicts
the time $1$  image of the black curve under the flow. }
\end{figure}
 
\subsection{Flexible embeddings in $\mathbb{C}P^1 \times \mathbb{C}P^1$}\mbox{}

 \begin{lemma}\label{lem:symp_flex_embed_surface} Consider the symplectic manifold $(\C P^1 \times \C P^1, \omega_{FS} + \omega_{FS})$. For every $g$ positive there exists a flexible symplectic embedding of genus  $g$ surface in $(\C P^1 \times \C P^1, \omega_{FS} + \omega_{FS}).$
 \end{lemma}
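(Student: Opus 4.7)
My plan is to embed $\Sigma_g$ as a smooth algebraic curve $C_0$ of bidegree $(2,g+1)$ in $\C P^1 \times \C P^1$; the classical genus formula for smooth curves in a product of projective lines gives genus $(2-1)((g+1)-1)=g$, and $C_0$ is automatically symplectic with respect to $\omega_{FS}+\omega_{FS}$ because it is a complex submanifold of the K\"ahler manifold $(\C P^1\times\C P^1, \omega_{FS}+\omega_{FS})$. Writing $i\co \Sigma_g\hookrightarrow \C P^1\times\C P^1$ for this embedding and $\Omega = i^*(\omega_{FS}+\omega_{FS})$, the Humphries theorem recalled in Section~\ref{sec:mcg_preliminaries} tells us that $\mathcal{M}CG(\Sigma_g,\Omega)$ is generated by positive Dehn twists along the curves $a_1,\ldots,a_g,\ b_1,\ldots,b_{g-1},\ c_1,\ c_2$. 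Hence to prove flexibility it suffices to produce, for each such generator $\gamma$, an ambient symplectic isotopy of $(\C P^1\times\C P^1,\omega_{FS}+\omega_{FS})$ that conjugates the positive Dehn twist $\tau_\gamma$.

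To produce such an isotopy I would invoke Lemma~\ref{lem:ambient_twist_along_vanishing_cycle}, whose hypothesis demands that $\gamma$ be realised as a vanishing cycle of a Lefschetz pencil on $\C P^1 \times \C P^1$ having $C_0$ as a smooth fiber. The natural candidate is the generic pencil of bidegree $(2,g+1)$ curves through a base locus chosen so that $C_0$ is a member. For each smooth member, restricting the first-factor projection $\pi_1\co \C P^1\times\C P^1\to\C P^1$ to the member gives a two-sheeted branched cover of $\C P^1$ ramified over $2g+2$ points, so every smooth fiber of the pencil is hyperelliptic. When the pencil parameter approaches a critical value, two of these branch points collide; the vanishing cycle is then the simple closed curve on the smooth fiber which double-covers the embedded arc joining the colliding branch points. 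By Birman--Hilden theory, each Humphries generator on the hyperelliptic surface $\Sigma_g$ is isotopic to such a symmetric simple closed curve, and by deforming the defining coefficients of the pencil one can steer the collision so that the resulting vanishing cycle is isotopic to any prescribed curve $a_i$, $b_j$, $c_1$, or $c_2$.

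Applying Lemma~\ref{lem:ambient_twist_along_vanishing_cycle} to each of these pencils therefore produces, for every Humphries generator $\gamma$, a $1$-parameter family of symplectomorphisms of $\C P^1\times\C P^1$, compactly supported in a Darboux ball around the chosen Lefschetz critical point, that conjugates $\tau_\gamma$. Composing these isotopies according to a word in the Humphries generators expressing an arbitrary $f\in\mathcal{M}CG(\Sigma_g,\Omega)$ yields an ambient symplectic isotopy conjugating $f$, and hence shows that $i$ is symplectically flexible. I expect the main obstacle to be the Birman--Hilden identification step: one must verify explicitly that the vanishing cycle arising from a prescribed branch-point collision is, up to symplectic isotopy on $\Sigma_g$, the intended Humphries curve, and that the various pencils selected for different generators can all be arranged to share the single fixed curve $C_0$ as a smooth fiber. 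Both verifications are essentially combinatorial statements about pencils of hyperelliptic curves on $\C P^1\times\C P^1$, but the book-keeping (especially for the non-separating curves $a_i$, which are not the most obvious symmetric curves in a single fixed pencil) will require a careful case analysis.
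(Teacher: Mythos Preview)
Your choice of the embedding as a smooth $(2,g+1)$ curve coincides with the paper's, but the mechanism you propose for conjugating the Humphries twists is genuinely different from the paper's and contains a gap for $g\ge 3$. You want to realise each Humphries generator as a vanishing cycle of a pencil of bidegree $(2,g+1)$ curves through $C_0$. However, every smooth $(2,g+1)$ curve is hyperelliptic via the first-factor projection $\pi_1$, and for $g\ge 2$ the hyperelliptic involution $\iota$ on $C_0$ is unique. A nodal degeneration in any $(2,g+1)$ pencil corresponds, under $\pi_1$, to a collision of two branch points, so the resulting vanishing cycle is always an $\iota$--symmetric curve on $C_0$. Dehn twists along $\iota$--symmetric curves commute with $\iota$ and hence lie in the hyperelliptic mapping class group; for $g\ge 3$ this is a proper subgroup of $\mathcal{M}CG(\Sigma_g)$. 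Consequently the Humphries generators cannot all be isotopic to symmetric curves, and your Birman--Hilden identification step fails: no choice of $(2,g+1)$ pencils through $C_0$ will produce the full generating set.

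The paper avoids placing $C_0$ itself as a fibre of any pencil. Instead it builds the $(2,g+1)$ curve by Gromov--summing two vertical and $g+1$ horizontal copies of $\C P^1$, and then for each pair of adjacent horizontal spheres it uses the $(2,2)$ pencil whose generic fibre is the associated \emph{sub-torus}. The crucial point is that Lemma~\ref{lem:ambient_twist_along_vanishing_cycle} yields an ambient isotopy supported in a small Darboux ball around the Lefschetz critical point; this ball meets $\Sigma_g$ only in an annular neighbourhood of the vanishing cycle, so the isotopy induces the desired Dehn twist on $\Sigma_g$ regardless of the global geometry of $\Sigma_g$. Running over the different sub-tori (which are fibres of \emph{different} pencils) together with the chain relation $(\tau_{a_i}\tau_{b_i})^6=\tau_c$ then gives enough curves to match the Humphries generators. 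The locality of Lemma~\ref{lem:ambient_twist_along_vanishing_cycle} is precisely what lets the paper escape the hyperelliptic constraint that blocks your global-pencil argument.
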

 \begin{proof}
 Recall that $\C P^1 \times \C P^1$ admits holomorphic Lefschetz  pencil having its smooth fiber a  
  symplectically  embedded $2$--torus $\mathbb{T}^2,$ and the base locus  consisting of $8$ points. Furthermore, the $2$--torus is the zero of a generic 
 section of the line bundle  $\mathcal{O}(2) \otimes \mathcal{O}(2),$ and the vanishing cycles -- up to Hurewicz moves -- consist of curves $a$ and $b$ as depicted
  in Figure~\ref{fig:basic_flex_torus} having relation
  $\left(\tau_{a} \cdot \tau_{b} \right)^6 = Id.$ Let us denote this pencil by $\pi_{(2,2)}: \C P^1 \times \C P^1  \setminus B_{(2,2)} \rightarrow \C P^1,$ where $B_{(2,2)}$
  denotes the base locus for the pencil consisting of $8$
  points.
  
 Since the homology class of this torus is $(2,2),$ we know that any such torus is symplectically isotopic to  a symplectically  embedded torus constructed via the following procedure. 
 
 Choose a pair of points $N, S$ in $\C P^1.$ Next, consider two vertical
 $\C P^1$ given by $\{N\} \times \C P^1$ and $\{S \} \times
 \C P^1.$ Next, consider two horizontal $\C P^1$'s given
 by $\C P^1 \times \{N\}$ and $\C P^1 \times \{S\}.$ Now 
 perform ambient Gromov sum at the points $\{(N,N),(N,S)
 (S, N), (S,S)\}.$ Let us denote this torus by $\mathbb{T}_{(2,2)}.$
 
 We claim that $\mathbb{T}_{(2,2)}$ is flexible in $(\C P^1
 \times \C P^1, \omega_{FS} + \omega_{FS}).$ This is because, the pencil $\pi_{(2,2)}$  on $\C P^1 \times \C P^1$ with fiber 
 $\mathbb{T}_{(2,2)}$ has vanishing cycles isotopic to
 the curves $a$ and $b$ as depicted in the Figure~\ref{fig:basic_flex_torus}. Hence according to 
 Lemma~\ref{lem:ambient_twist_along_vanishing_cycle}
we can conjugate Dehn twists $\tau_{a}$ and $\tau_{b}$ via family symplectomorphisms. Furthermore, each one of this family is 
supported in a small ball which contains a tubular
neighborhood of the vanishing cycle. We know that $\tau_{a}$ and $\tau_{b}$ generate the mapping class group of any torus, 
and hence we get that $\mathbb{T}_{(2,2)}$ is flexible in $\C P^1 \times \C P^1.$

\begin{figure}[ht]\label{fig:basic_flex_torus}
 \begin{center}
\includegraphics[scale=0.7]{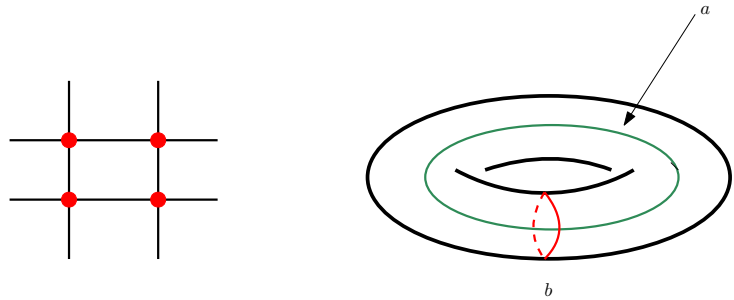}
 \end{center}
\caption{ The left of the figure is the schematic description of the embedded curve $\mathbb{T}_{(2,2)}$. 
Two vertical lines correspond to sphere $\{N\} \times \C P^1$ and $\{S\} \times \C P^1,$ while the two horizontal
lines depict $\C P^1 \times \{N\}$ and $\C P^1 \times \{S\}$. The red disk depicts the Gromov sum along the point of
intersection. On the right of the figure the picture depicts the curve $\mathbb{T}_{(2,2)}$ obtained as a result Gromov
sum of  two vertical $\C P^1$'s  with two horizontal $\C P^1$'s together with vanishing cycles for the pencil $\pi_{(2,2)}.$ }
\end{figure}

Till now we have shown that there exists a flexible embedding of tours in $\C P^1 \times \C P^1.$ We now
show how to produce a symplectic embedding of a surface of genus $g$ for any $g>0.$ Consider $g-1$ distinct points $P_1, \cdots, P_{g-1}$
on $\C P^1$ such that $\{P_1, \cdots, P_{g_1}\} \cap
\{N, S\} = \emptyset.$ Furthermore, we choose the 
points so that
non of the point in the collections $\{(N, P_i)\}$ or
$\{(S, P_i)\}$ lie on curves $\lambda$ and $\mu$ embedded
in $\mathbb{T}_{(2,2)} \subset \C P^1 \times \C P^1.$

Next, consider the  vertical $\C P^1 \times \{P_i\}$
which intersect the torus $\mathbb{T}_{(2,2)}$ in a 
pair of points $(N, P_i)$ and $( S, P_i).$ For each $i$
perform ambient Gromov sum to produce an embedding
of genus $g$ surface. We claim that this embedded surface
is flexible.  Let us denote this embedded surface by 
$\Sigma_g$

In order to establish the claim, notice that $\Sigma_g$
for each $g$ admits an embedding of a torus isotopic to 
$\mathbb{T}_{(2,2)}$ minus a pair of disk. This torus consist of the tours obtained by taking ambient Gromov sum of $\C P^1 \times \{P_i\},$ $\C P^1 \times \{P_{i+1}\}$ for $ i = 0, \cdots, g+1,$ where $P_0 = N$ and $P_{g+1} = S$ with
$\{N\} \times C P^1$ and $\{S\} \times \C P^1$ along 
four points of intersections. Let $a_i$ and $b_i$ be curves of this torus which corresponds to vanishing
cycles for the pencil $\pi_{(2,2)}.$ Hence, by
Lemma~\ref{lem:ambient_twist_along_vanishing_cycle} for the embedded
surface $\Sigma_g,$ Dehn twists $\{\tau_{a_i}, \tau_{b_i}\}, i =1, \cdots, g$ can be conjugated in
$\C P^1 \times \C P^1$ via families of symplectomorphisms each family having support 
in a small ball. Furthermore, notice that since the
relation for the pencil among Dehn twists is 
$$\left( \tau_{a_i} \cdot \tau_{b_i}\right)^6 = 1,$$

\noindent we get that for the curve $c$ depicted 
in Figure~\ref{fig:surface_flex} the Dehn twist $\tau_{c}$ also can be
conjugated. Now we know form Theorem~\cite{BM} that
the mapping class group of genus $g$ surface is
generated by Dehn twists along curves $a_i, b_j,c_1$ and $c_2$ and 
the collection of curves on which we can perform Dehn twist in 
the embedded surfaces  contains these curves. Hence we have the lemma.
 
 \end{proof}

\begin{figure}[ht]\label{fig:surface_flex}
 \begin{center}
\includegraphics[scale=0.6]{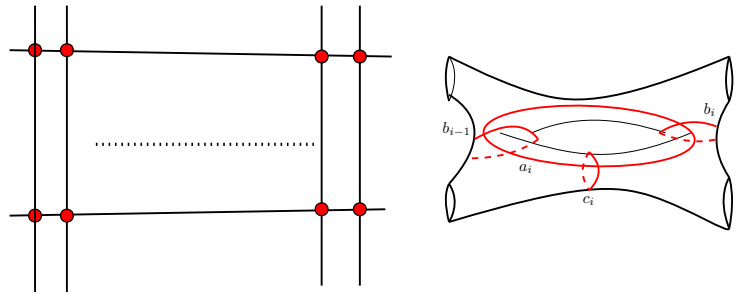}
 \end{center}
\caption{The figure depicts  a flexibly  embedded surface in
$\C P^1 \times \C P^1$ which is constructed out of a flexibly embedded
torus 
$\mathbb{T}_{(2,2)}.$ As usual horizontal lines depict $\C P^1 \times \{p\}$,
where $p$ is a point in $\C P^1,$ while vertical lies depict $\{q\} \times
\C P^1,$ where $q \in \C P^1.$ The red dot at the intersection depicts 
that we have performed ambient symplectic Gompf sum in a neighborhood of
an intersection point of a horizontal $\C P^1$ with a vertical $\C P^1.$}
\end{figure}

\begin{remark}
 It is almost immediate that Lemma~\ref{lem:ambient_twist_along_vanishing_cycle} has a natural generalization to higher dimensions, where we work with Dehn-Seidel twists instead of
 Dehn twists.
\end{remark}

\subsection{Flexible embedding of surface in $\mathcal{L}(-3)$} \mbox{}

Let $\mathcal{L}(-3)$ denote  the Stein domain corresponding to the complex disk bundle over  
$\C P^1 $ having first  Chern class
$-3.$ Any disk bundle associated to the line bundle
$\mathcal{L}(-3)$ will be denoted by $\mathbb{D}\mathcal{L}(-3).$ The purpose of this section is to
show that given a compact orientable surface of genus $g$
having one puncture there exits a symplectic
embedding of the surface in $\mathcal{L}(-3).$ More precisely, we have the following: 

\begin{lemma}\label{lem:flex_embedding_with_boundary}
Let $\Sigma$ be a once punctured surface of genus 
$g$ with $ g \geq 2.$ There exists a proper symplectic embedding  of $f_{\Sigma}: \Sigma \hookrightarrow \mathcal{L}(-3)$  which satisfies the following properties:
\begin{enumerate}
 \item There exists a plurisubharmonic  proper and exhausting  function $F_{\Sigma}$ on $\mathcal{L}(-3) \rightarrow [0, \infty)$ such that for sufficiently large $M$ $F_{\Sigma}$
has no critical value $c$ with $c \geq M$ and for all
such $c,$ $f_{\Sigma}(\Sigma) \cap F_{\Sigma}^{-1}(M, \infty)$ is a properly embedded annuls in $f(\Sigma).$

\item The embedding is flexible. 
\end{enumerate}

\end{lemma}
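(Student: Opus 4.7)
The approach I would take mirrors the proof of Lemma~\ref{lem:symp_flex_embed_surface} for the closed case, working instead with a natural compactification of $\mathcal{L}(-3)$ and then descending. First, I would compactify $\mathcal{L}(-3)$ to the Hirzebruch surface $F_3 = \mathbb{P}(\mathcal{O}(-3) \oplus \mathcal{O})$ by adjoining a section at infinity $s_\infty$, which is a symplectic $(+3)$-sphere, so that $\mathcal{L}(-3) \cong F_3 \setminus s_\infty$. Inside $\mathcal{L}(-3)$ one already has the zero section $s_0$, a symplectic $(-3)$-sphere, and the fibers of the Hirzebruch projection $F_3 \to \C P^1$, each a $\C P^1$ meeting both $s_0$ and $s_\infty$ transversely in one point.

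Next, I would build the surface by an iterative ambient Gromov sum construction analogous to $\mathbb{T}_{(2,2)}$ in Lemma~\ref{lem:symp_flex_embed_surface}. Starting from $s_0 \cup s_\infty$ together with an appropriate collection of fibers, I would symplectically smooth all transverse intersections except one on $s_\infty$. A Euler-characteristic count determines how many fibers to include so that the resulting closed symplectic surface $\hat\Sigma \subset F_3$ has genus $g$. Setting $f_\Sigma(\Sigma) = \hat\Sigma \setminus s_\infty$ yields a properly embedded once-punctured genus $g$ surface in $\mathcal{L}(-3)$, with the single end asymptotic to a fiber disk of the Hirzebruch projection.

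For condition (1), I would take the plurisubharmonic function $F_\Sigma(x,w) = \tfrac{1}{2}|w|^2 + \pi^*\varphi(x)$, where $w$ is a Hermitian fiber coordinate and $\varphi$ is an exhausting Morse function on $\C P^1$. Its critical values are finitely many (coming from critical points of $\varphi$ on the zero section), and since $\hat\Sigma$ meets $s_\infty$ transversely in a single point, the slice $f_\Sigma(\Sigma) \cap F_\Sigma^{-1}(M,\infty)$ is a properly embedded annulus in $f_\Sigma(\Sigma)$ for all sufficiently large $M$, giving the desired cylindrical behavior at infinity. For condition (2), the plan is to realize each Humphries generator $a_i, b_j, c_1, c_2$, together with the boundary-parallel twist $\tau_d$, as the vanishing cycle of an auxiliary Lefschetz pencil on $F_3$ having $\hat\Sigma$ as a regular fiber. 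Applying Lemma~\ref{lem:ambient_twist_along_vanishing_cycle} to each such pencil produces a $1$-parameter family of symplectomorphisms of $F_3$ supported in a small $4$-ball around the corresponding Lefschetz critical point; by engineering each critical point to lie strictly inside $\mathcal{L}(-3)$ and away from $s_\infty$, these families descend to symplectomorphisms of $\mathcal{L}(-3)$ that conjugate the corresponding Dehn twists, proving flexibility.

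The main obstacle will be realizing the boundary-parallel twist $\tau_d$ as a vanishing cycle whose corresponding Lefschetz critical point lies inside $\mathcal{L}(-3)$. The curve $d$ is parallel to the puncture, hence close to $s_\infty$, and a naive pencil construction would place its vanishing degeneration at or on $s_\infty$, outside $\mathcal{L}(-3)$. To handle this I would modify the Gromov-sum construction near the intersection of $\hat\Sigma$ with $s_\infty$: by introducing an additional auxiliary smoothing with a small parameter pushed into $\mathcal{L}(-3)$, one obtains a local pencil structure whose vanishing cycle is isotopic to $d$ and whose critical point lies strictly inside $\mathcal{L}(-3)$. Once this is set up, Lemma~\ref{lem:ambient_twist_along_vanishing_cycle} applies as before, completing the verification that all Humphries generators plus $\tau_d$ are conjugated by symplectic isotopies compactly supported in $\mathcal{L}(-3)$.
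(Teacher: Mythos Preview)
Your approach differs substantially from the paper's, and it contains a misconception that causes you to chase a nonexistent obstacle.

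The paper's proof is much shorter and does not go through any compactification or Gromov-sum construction. It simply invokes the known fact (from \cite{EL} and \cite{OS}) that $\mathcal{L}(-3)$ itself admits, for every $g$, a Stein Lefschetz fibration whose regular fiber is a genus $g$ surface with one boundary component and whose vanishing cycles are exactly the Humphries curves $a_i, b_j, c_1, c_2$. The embedding $f_\Sigma$ is then just the inclusion of a smooth fiber (with a cylindrical end attached), and flexibility follows immediately from the analogue of Lemma~\ref{lem:ambient_twist_along_vanishing_cycle} applied to each vanishing cycle of this fibration. No auxiliary pencils on $F_3$ are needed.

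Your main obstacle is not an obstacle at all: the statement concerns a once \emph{punctured} surface, not a surface with boundary. As the paper notes explicitly (see the caption to Figure~\ref{fig:humphries_generators_with_boundary}), the mapping class group of a once-punctured surface of genus $g\ge 2$ is already generated by Dehn twists along $a_i, b_j, c_1, c_2$; the boundary-parallel twist $\tau_d$ is only needed for the surface with boundary. So your last paragraph, and the delicate positioning of a critical point for $\tau_d$ inside $\mathcal{L}(-3)$, can be deleted entirely. Beyond this, your plan to realize each Humphries generator as a vanishing cycle of some Lefschetz pencil on $F_3$ with $\hat\Sigma$ as a regular fiber is asserted rather than established: you would need to exhibit such pencils in the relevant homology class and verify that their vanishing cycles land where you want on your Gromov-summed surface, which is nontrivial and is precisely what the paper avoids by quoting the ready-made Stein Lefschetz fibration on $\mathcal{L}(-3)$.
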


\begin{proof}
 We know that $\mathcal{L}(-3)$ admits a Stein Lefschetz fibration having fiber a genus $g$ surface and vanishing
 cycles consisting of curves $a_i, b_j,c_1,$ and
 $c_2$ depicted in Figure~\ref{fig:humphries_generators_with_boundary}.
 Let $\widetilde{f_{\Sigma}}$ be the embedding which send 
 $\widetilde{\Sigma}$, where $\widetilde{\Sigma}$ is a 
 surface with one boundary component, to a smooth fiber of this Lefschetz fibration. Clearly this embedding is symplectic. By adding an appropriate to the Stein Lefschetz fibration, it is easy to see that there exists
 an embedding $f_{\Sigma}$ of $\Sigma$ in the Stein manifold $\mathcal{L}(-3)$ which satisfies the first
 property in the statement. Hence in order to establish
 the theorem, we just need to show that the embedding is
 flexible. We now establish this.
 
 First of all note that  we can assume that the monodromy associated
 to any of the vanishing cycles is supported in a small neighborhood of the vanishing cycle. Hence by an argument
 similar to the one used in the proof of Lemma~\ref{lem:ambient_twist_along_vanishing_cycle}, we get that every Dehn twist along vanishing cycles $a_i, b_j, c_1,$ and $c_2$ can be conjugated. Since the mapping class group of $\Sigma$ is generated
 by Dehn twist along $a_i, b_j, c_1,$ and $c_2$
 the lemma follows. 
 \end{proof}
 
We would like to remark that Lemma~\ref{lem:flex_embedding_with_boundary} is implicitly proved  in the article~\cite{EL} by J. Etnyre and Y. Lekili. The motivation for studying $\mathcal{L}(-3)$ and flexible surfaces 
in them for them is related to their  construction of universal $5$--manifold
in which all contact $3$--manifolds embed.

\section{Proof of Theorem~\ref{thm:symp_lef_embedding}
}\label{sec:proofs_lfe}

 Let us now establish  Theorem~\ref{thm:symp_lef_embedding}. The proof is
 divided in four steps. In the first step we observe that given a Lefschetz fibration $\pi_M: M \rightarrow \C P^1,$ there is a Lefschetz fibration embedding
 of a small tubular neighborhood $\mathcal{N}$ of any singular fiber containing a unique critical point such
 the embedding restricted to any smooth fiber contained
 in $\mathcal{N}$ is flexible. We denote any such embedding by $L_i,$ where $i$ is indexed over the
 set of critical points of the fibration $\pi_M:M \rightarrow \C P^1.$
 
 In the second step we produce embeddings of
 small tubular neighborhoods of  singular
 fibers using
 the first step in such way that no two embeddings 
 intersect, and any pair of smooth  fibers are 
 ambiently symplectically isotopic in $\C P^1 
 \times \C P^1$. In the third step,  we use flexibility of the  embedded fibers to produce an embedding of
 the Lefschetz fibration $\pi_M: M \rightarrow \C P^1$
 restricted to the inverse image of a disk embedded
 in $\C P^1$ which contains all critical values.

In the final step, the triviality of $\pi_1$ of the group
 of symplectic form preserving diffeomorphisms isotopic
 to the identity of any surface of genus $g, g \geq 2$
 is used to conclude that we have a symplectic  Lefschetz fibration embedding of $M$ in $(P, \omega_{pr})$ as claimed.
 
\begin{proof}[Proof of Theorem~\ref{thm:symp_lef_embedding}]\mbox{}

We are given a Lefschetz fibration $\pi:M \rightarrow
\C P^1.$ Let $x_1, x_2, \cdots, x_n$ be the set of
critical points of the fibration. Let $x_i$ be the critical point of $M$ corresponding to the vanishing cycle $c_i.$ Denote by $F_{x_i}$ the fiber which contains the critical point $x_i$ and by $\mathcal{N}(F_{x_i})$ a  neighborhood of $F_{x_i}$ containing the unique critical
point $x_i.$ The neighborhood $\mathcal{N}(F_{x_i})$ is obtained by taking the inverse image under $\pi_M$ of a small disk $\mathbb{D}_{p_i}$  of
radius $\epsilon(p_i),$ which contains only 
$\pi(x_i) = p_i$ as critical value. The proof is
divided in three steps. The first step deals with
embeddings of $\mathcal{N}(F_{x_i}).$

\vspace{0.3cm}

\noindent\textbf{Step-1:}
\vspace{0.3cm}

In this step we prove that there exists an embedding 
$\phi_i$ of a small tubular neighborhood $\mathcal{N}(F_{x_i})$ of $F_{x_i}, i \in \{1, 2, \cdots, l\}$ in $V \times \Sigma$ such that the following diagram commutes:
 
\begin{center}
\begin{equation}\label{diag:local_embedding}
\begin{tikzcd}
 \mathcal{N}(F_{x_i}) \arrow[hookrightarrow]{r}{\phi_i} \arrow[d, "\pi"] &  \C P^1 \times \C P^1 \times 
 \mathbb{D}_{p_i} \arrow[d, "\pi_2"]  \\
\pi(\mathcal{N}(F_{x_1})) \arrow[hookrightarrow]{r} & \mathbb{D}_{p_i}. 
\end{tikzcd}
\end{equation}
\end{center}
 
According to Lemma~\ref{lem:symp_flex_embed_surface} there exists an embedding $H: \Sigma_g \hookrightarrow 
\C P^1 \times \C P^1$ which is symplectic and flexible.
Recall that this is a $(2,g+1)$--curve embedded in
$\C P^1 \times \C P^1.$ We will denote this curve
by $\Sigma_g.$

Since any two essential cycles on the fiber of 
$\pi$ can be conjugated by an isotopy of $M,$ we can
assume that the fiber of $\pi$ over a point $p,$ where $p$ is a point 
$\C P^1$ which lies on the boundary of 
$\mathbb{D}_{p_1}$ admits a symplectic embedding in $\C P^1 \times \C P^1 \times \{p\}$ such that its image is 
the flexibly embedded $(2, g+1)$-curve in $\C P^1 
\times \C P^1 \times \{p\}$ and the vanishing cycle $c_1$
is mapped a  fixed vanishing cycle $\mathcal{C}$ for the pencil $\pi_{(2,n)}:
\C P^1 \times \C P^1 \rightarrow \C P^1$ whose
smooth fiber is $\Sigma_g.$ Let us denote
the vanishing cycle by $\nu.$  Consider 
the blow-up of $\C P^1 \times \C P^1$ at base locus $B$
corresponding to the pencil of $\C P^1 \times \C P^1$
for which $\Sigma_g$ is a smooth fiber.  Let us denote
the blown-up manifold by 
$\widetilde{\C P^1 \times \C P^1}.$ Let $\pi_B: 
\widetilde{\C P^1 \times \C P^1} \rightarrow \C P^1$
be the corresponding Lefschetz fibration. Let $T:
\widetilde{\C P^1 \times \C P^1} \rightarrow \C P^1 \times \C P^1$ be the blow-up projection.  We know that
in $\widetilde{\C P^1 \times \C P^1}$ there exits a 
diagonal embedding of $\mathcal{N}(F_{x_i})$ in
$\widetilde{\C P^1 \times \C P^1} \times \mathbb{D}_{p_1}
\rightarrow \mathbb{D}_{p_1}$ given by diagonal embedding
consisting of $x \rightarrow (x, \pi_B(x))$ followed by
a translation to ensure that the projected disk via 
$\pi_B$ is mapped to the disk $D_{p_1}.$ Call this
embedding $\phi = (\phi_1, \phi_2).$ Given $\phi$
we can define the required embedding of $\mathcal{N}(F_{x_i})$ in $\C P^1 \times \C P^1 \times \mathbb{D}_{p_1}$ by sending $x$ to $(T(\phi_1(x)), \pi_B(x)).$
Let us denote this embedding by $L_1$

\vspace{0.3cm}

\noindent \textbf{Step-2:}

\vspace{0.3cm}

Following step-1, produce
embeddings $L_i$ of $\mathcal{N}(F_{x_i})$ for each 
$i.$ Observe that the embedding is produced such that 
the vanishing cycle $c_i$ is mapped to the curve 
$\mathcal{C},$ which is a fixed vanishing cycle 
for the pencil $\pi_{(2, g+1)}: \C P^1 \times \C P^1
\rightarrow \C P^1.$  In addition, we have that
any embedding $L_i$ restricted to a smooth fiber is
a flexible embedding embedding of the fiber in $\C P^1
\times \C P^1,$ and fibers $L_i (F_{u_i})$ and 
$L_j(F_{u_j}),$ where $u_i \in \mathbb{D}_{p_i}$ is
a regular value, are symplectically isotopic in 
$\C P^1 \times \C P^1.$

\vspace{0.3cm}

\noindent\textbf{Step-3:}
\vspace{0.3cm}

For each $i$ let $Z_i$ be a point on the boundary of $\mathbb{D}_{p_i},$ and  let $F_{z_i}$ denote the embedded fiber over $Z_i.$  Fix the point $Z = Z_1 \in \C P^1$ which does not lie on 
any of the disk $\mathbb{D}_{p_i}$ when $i \neq 1.$ For each $i$ let 
$\overline{ZZ_i}$ be a smooth embedded path joining 
$Z_i$ to $Z.$ Furthermore, let us assume that 
$\overline{ZZ_i}$ and $\overline{ZZ_j}$ intersect only at $Z$ when $i \neq j.$

Let $L$ be a fixed embedding of $(2, g+1)$--curve
$\Sigma_{g}$ in $\C P^1 \times \C P^1 \times \{Z\}$
obtained by the restriction of the embedding $L_1$ to the fiber over $Z = Z_1.$ The triviality of the fiber bundle $\mathcal{P} \rightarrow \C P^1$ implies that there exist
an embedding $\widetilde{L_i}$ of $\Sigma_g$ in 
$\C P^1 \times \C P^1 \times \{z_i\}$ obtained by
identifying the fiber at $Z_i$ with fiber at $Z$ via
this given trivialization such that images of 
$\Sigma_g$ under embedding coming from $L_i| \pi^{-1}\{z_i\}$ and $\widetilde{L}_i$ coincide. 

Observe that the flexibility of the embedding with
image $\widetilde{L_i}$ in $\C P^1 \times \C P^1 \times
\{z_i\}$ implies that these two embeddings are isotopic.
Hence there exist an embedding of $\Sigma_g \times [0,1]$ along the path $\overline{ZZ_i}$ such that 
$\Sigma_g \times \{0\}$ is the embedding $L_i$ restricted
to $\pi^{-1}(Z_i)$ while $\Sigma \times \{1\}$ is 
embedding $L.$

Observe that till now we have produce an embedding 
of $M \setminus \pi^{-1}(\cup \mathbb{D}_{p_i} \cup \overline{zz_i})$ in the manifold $\mathcal{P}.$ 
Consider a small neighborhood of $\cup \mathbb{D}_{p_i}
\cup \overline{zz_i}$ in $\C P^1.$ It is clear that
this neighborhood is a disk $\mathbb{D}$ in $\C P^1$
and there exits a Lefschetz fibration  embedding of $M \setminus  \pi_M^{-1}(\mathbb{D})$ in $\mathcal{P}.$

\vspace{0.3cm}

\noindent\textbf{Step-4:}
\vspace{0.3cm}

Since $M$ is  a closed manifold. We know that the
product of $\tau_{c_i}$ is the identity. This implies
that $\pi^{-1}(\partial \mathbb{D})$ is an embedding
of $\mathbb{S}^1 \times \Sigma_g$ 

Since the genus 
of smooth fiber is at least $2,$ and since the fundamental group of any fixed symplectic form preserving diffeomorphisms of a surface of genus $g$
is trivial, we can assume that $M$ is obtained by
gluing $M \setminus \pi_M^{-1}(\mathbb{D})$ with 
its complement via the identity map.

It is now clear that the constructed embedding 
can be assumed to agree with the embedding 
of $\Sigma \times \partial \mathbb{D}^2$ given by 
$(x, \theta) \rightarrow (L(x), \theta), \theta 
\in \partial \mathbb{D}.$ Let us denote this 
embedding of $M \setminus \pi_M^{-1}(\mathbb{D})$ by
 $\widetilde{\Psi}.$ Finally, we define the required
 embedding of $M$ by the formula:
 
 \[\Psi(x)=\begin{cases} \widetilde{\Psi}(x),        &\text{if $x \in M \setminus \pi_M^{-1}(\mathbb{D})$;}\\ (L(x),x) &\text{otherwise.}\end{cases}\]
 
 Hence the theorem.

\end{proof}

\subsection{Proof of Theorem~\ref{thm:embed_upto_blowup}}\mbox{}

For the sake of completeness, let us discuss how to prove 
 Theorem~\ref{thm:embed_upto_blowup} assuming Theorem~\ref{thm:symp_lef_embedding}.

\begin{proof}[Proof of Theorem~\ref{thm:embed_upto_blowup}]
Let $(M, \omega)$ be a given symplectic manifold. It follows from Theorem~\ref{thm:Donaldson} that  
after finite number of blow-ups there exists a 
(simplified) Lefschetz fibration on the blown-up manifold
$\widetilde{M}.$ It is clear that the manifold $\widetilde{M}$ satisfies the hypothesis of Theorem~\ref{thm:embed_upto_blowup}. Hence
applying Theorem~\ref{thm:embed_upto_blowup}, we get the required embedding
of $\mathcal{B}M$ in $(\C P^1 \times \C P^1 \times \C P^1,\omega_{pr}).$
\end{proof}

\section{Proof of Theorem~\ref{thm:weinstein_embedding}}.\label{sec:stein_lef_fib_embedding}

In order to establish Theorem~\ref{thm:Weinstein_embedding}, we need the notion of Weinstein Lefschetz fibration and the existence of Weinstein
Lefschetz fibration.  We will follow the treatment discussed in the article~\cite{GP} by E. Giroux and J. Pardon. For this we need 
need the definition of Stein domains and associated Lefschetz fibration.  Readers familiar with Stein domains and symplectic geometry 
associated with them can skip this introductory discussion.

 \subsection{Stein domains and Stein Lefschetz fibrations}\mbox{}

Let us begin by recalling the definition of a Stein domain.
 
 \begin{definition}[Stein domain]\label{def:stein_domain}
  A Stein domain is a compact complex manifold $(V, \partial V)$ with boundary together with a
  smooth  Morse function $\phi$ which satisfies the following:
  \begin{enumerate}
   
   \item $\phi$ is $J$--convex, i.e.,  $-d (d\phi  \circ J) (v, Jv) > 0$ for $v \neq 0,$ where $J$ is the
   almost complex structure associated to the complex structure on $V.$
   \item $\partial V$ is a regular level set for the function $\phi.$
  \end{enumerate}
 \end{definition}
 
 Next, we define Stein Lefschetz fibration.

  \begin{definition}
   Let $V$ be a smooth manifolds with corners and let $\pi:V \rightarrow \mathbb{D}^2$ be Lefschetz fibration.
   We say that this Lefschetz fibration is a Stein Lefschetz fibration provided the following properties
   are satisfied:
\begin{enumerate}
  \item There exists a complex structure on $V$ such that the map $\pi$ is holomorphic.
  \item There exists a $J$--convex function $\phi: V \rightarrow \mathbb{R}$ with $\partial_h V = \{\phi = 0\}.$
 \end{enumerate}
 \end{definition}
 
 The most important property of a Stein Lefschetz fibration on $V$ is that $V$ can be smoothened out to
 obtain a Stein domain which is unique up to a deformation in the following sense. 
 
 Let $V_1$ and $V_2$ denote two Stein domains
 obtained as a result of smoothing of a Stein Lefschetz fibration $\pi: V \rightarrow \mathbb{D}$, then 
 there exits a $1$--parametric family $V_t, t \in [1,2]$  of Stein domains connecting $V_1$ and $V_2.$ Hence,
 form now on by $V^{sm}$ we will mean a Stein domain obtained after smoothing of the total space $V$
 of a Stein Lefschetz fibration $\pi:V \rightarrow \mathbb{D}^2.$
 
 The converse of this was established by Giroux and Pardon~\cite[Theorem:1.5]{GP}.
 
 \begin{theorem}[Theorem:1.5~\cite{GP} ]\label{thm:giroux_pardon_existence}
  Let $V$ be a Stein domain. There exists a Stein Lefschetz fibration $\pi: V' \rightarrow \mathbb{D}^2$
  such that $V'^{sm}$ is a deformation of $V.$ Furthermore, we can always provide a fibration 
  for which (1) the boundary of the fiber is connected, (2) every vanishing cycle is essential, and (3) the genus of any smooth
  fiber is at least $3.$
 \end{theorem}

 It follows from the definition  that the $2$--form $-d (d \phi \circ J)$ is a symplectic form on 
 $(V, \partial V).$ Hence, Stein domains are natural examples of exact symplectic manifold. 
 the symplectic geometry of Stein domains is naturally captured by the associated Weinstein domain structure. We now discuss this.
 
 \subsection{Weinstein domains and Weinstein Lefschetz fibration}\mbox{}

 \begin{definition}[Weinstein domain]\label{def:weinstein_domain}
 A Weinstein domain consists of is a compact exact symplectic manifold
 with boundary $(W, \omega = d \lambda),$ where $\lambda$ is 
 a $1$-form, admitting a Morse function $\phi: W \rightarrow 
 \mathbb{R}^{+}$ which satisfy the following:
 \begin{enumerate}
  \item $\partial W$ is a level set of a regular value 
  of the function $\phi.$
  
 \item There exists a vector-field $X_{\lambda}$ defined 
 as $i_{X_{\lambda}} \omega = \lambda$ which is gradient 
 like for the Morse function $\phi.$
 \end{enumerate}
 
 \end{definition}

 \begin{definition}[Weinstein Lefschetz fibrations]\label{def:weinstein_fibration}
Let $(V,J)$ be a smooth manifolds with corners and let $\pi:V \rightarrow \mathbb{D}^2$ be Lefschetz fibration.
   We say that this Lefschetz fibration is a Weinstein Lefschetz fibration provided the following properties are satisfied:
   
   \begin{enumerate}
       \item there exists a $J$--convex function  $\phi:V\rightarrow \mathbb{R}$ such that $\partial_{h}V = \{ \phi = 0\},$ where
       $\partial_h V = \cup_{p \in \mathbb{D}^2} \partial(\pi^{-1}|[p\}),$
       
    \item If $p \in \mathbb{D}^2$ is a regular value then on $\pi^{-1}\{p\}$ the pair $(\phi,J)$ induces a Weinstein structure
    on $\pi^{-1}\{p\}$.

\end{enumerate}
 \end{definition}

 It follows from Theorem~\ref{thm:giroux_pardon_existence} and \cite[Theorem 1.10]{GP} that every $4$--dimensional
 Weinstein  domain admits  a Weinstein Lefschetz fibration as defined in Definition~\ref{def:weinstein_fibration}.

 We now define the universal Weinstein manifold $\mathcal{U}$
 mentioned in the statement of Theorem~\ref{thm:weinstein_embedding}.

\begin{definition}
 [Definition and basic properties of the universal Weinstein $3$--manifold $\mathcal{U}$]
\label{def:universal_stein_3-fold}
  \mbox{}
  \begin{enumerate}
  \item
 Let $\mathcal{L}(-3)$ denote   the complex disk bundle of $-3$ over $S^2.$ It is well known that this bundle
 admits a natural Stein structure. See~\cite{EL} and \cite{OS} for a precise description of this Stein structure.  
 \item
 Let $\mathbb{D}\mathcal{L}(-3)$ denote any disk bundle
 associated to the line bundle $\mathcal{L}(-3).$ 
Observe that $\mathbb{D}\mathcal{L}(-3)$ are Stein domains obtained by considering the inverse images 
under plurisubharmonic Morse functions of the type $(\infty, a]$ which  have no critical
value in the interval $[a, \infty).$
  
\item The product of a Stein manifold with 
  $\mathbb{C}$
  and the product of a Stein domain with the open unit disk is a Stein domain. In particular $\mathbb{D}\mathcal{L}(-3) \times
  \mathbb{D}^2$ is a Stein domain after suitable smoothing of corners,  while 
  $\mathcal{L}(-3) \times \mathbb{C}$ is  Stein manifold. 
  
 \end{enumerate}

 The Weinstein manifold structure associated to the Stein manifold  $\mathcal{L}(-3) \times \mathbb{C}$
with stein structure as discussed above will be called  the universal Weinstein manifold $\mathcal{U}.$

 \end{definition}

Let us now state the result regarding embeddings of Weinstein domains via Lefschetz fibrations.

\begin{theorem}\label{thm:stein_lef_fib_embedding}
 Let $\pi_V:V^4 \rightarrow \mathbb{D}^2$ be any (simplified) Weinstein fibration. There exists a symplectic Lefschetz fibration 
 embedding of $\pi_V:V^4 \rightarrow \mathbb{D}^2$ in the trivial Weinstein fibration 
 $\pi_2:\mathbb{D}\mathcal{L}(-3) \times \mathbb{D}^2 \rightarrow \mathbb{D}^2$  such that $\partial_h V \subset \partial \mathbb{D}\mathcal{L}(-3)
 \times \mathbb{D}^2$ and $\partial_v V \subset \mathbb{D}\mathcal{L}(-3) \times \partial \mathbb{D}^2.$ Furthermore,  by  choosing
 the radius of the disk bundle$\mathbb{D}\mathcal{L}(-3) $ with respect to an auxiliary metric correctly  there exist
 an iso-symplectic Weinstein fibration embedding of $V^4$ in  $\mathbb{D}\mathcal{L}(-3) \times \mathbb{D}^2.$ 
\end{theorem}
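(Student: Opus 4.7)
The plan is to mirror the four-step structure used for Theorem~\ref{thm:symp_lef_embedding}, with the role of Lemma~\ref{lem:symp_flex_embed_surface} replaced by Lemma~\ref{lem:flex_embedding_with_boundary}, which provides a flexible proper symplectic embedding of any once-punctured genus-$g$ surface into $\mathcal{L}(-3)$. By Theorem~\ref{thm:giroux_pardon_existence} we may freely assume the regular fiber $F$ of $\pi_V$ has genus at least $3$ and connected boundary, and that each vanishing cycle is essential, which are precisely the conditions under which the Humphries-style generating set of Lemma~\ref{lem:flex_embedding_with_boundary} is available as vanishing cycles on $\mathbb{D}\mathcal{L}(-3)$.

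For Step 1, let $x_1,\dots,x_n$ be the critical points with critical values $p_1,\dots,p_n\in \mathbb{D}^2$ and vanishing cycles $c_1,\dots,c_n\subset F$. Around each $p_i$ pick a small disk $\mathbb{D}_{p_i}$ containing no other critical value, and set $\mathcal{N}(F_{x_i})=\pi_V^{-1}(\mathbb{D}_{p_i})$. I would embed $\mathcal{N}(F_{x_i})$ into $\mathbb{D}\mathcal{L}(-3)\times \mathbb{D}_{p_i}$ in fiber-preserving fashion: a nearby regular fiber is sent by the fixed flexible embedding $\Sigma\hookrightarrow \mathbb{D}\mathcal{L}(-3)$ of Lemma~\ref{lem:flex_embedding_with_boundary}, arranged so that $c_i$ is mapped to one of the vanishing cycles of the Stein Lefschetz fibration on $\mathbb{D}\mathcal{L}(-3)$. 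The local model of the Lefschetz singularity, combined with the blow-up-and-diagonal construction used in Step 1 of the proof of Theorem~\ref{thm:symp_lef_embedding}, then upgrades the fiberwise embedding to a Lefschetz fibration embedding of $\mathcal{N}(F_{x_i})$ that realizes the monodromy $\tau_{c_i}$.

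For Steps 2 and 3, I would choose a base point $Z\in \mathbb{D}^2$ disjoint from all $\mathbb{D}_{p_i}$, disjoint arcs $\overline{ZZ_i}$ from $Z$ to $Z_i\in \partial \mathbb{D}_{p_i}$, and extend the local embeddings along these arcs using the trivialization of the target. The flexibility of the fiber embeddings, combined with a symplectic isotopy in $\mathbb{D}\mathcal{L}(-3)$ realizing the required change of marking (using Lemma~\ref{lem:ambient_twist_along_vanishing_cycle} applied to the pencil on $\mathbb{D}\mathcal{L}(-3)$), lets the fiber embeddings at each $Z_i$ be identified with a single fixed embedding $L$ over $Z$. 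This yields a Lefschetz fibration embedding of $\pi_V^{-1}(\mathbb{D})$ for a disk $\mathbb{D}\subset \mathbb{D}^2$ containing all critical values and the arcs. To extend over the annular region $\mathbb{D}^2\setminus \mathbb{D}$ I would use that $\pi_V$ restricted to it is a trivial $F$-bundle together with the triviality of $\pi_1$ of the group of boundary-fixing symplectomorphisms of $F$ for genus $\geq 2$; a one-parameter family of symplectomorphisms pinned to the identity on $\partial F$ provides the patching data, and since $\partial F$ is sent into $\partial \mathbb{D}\mathcal{L}(-3)$ throughout, this automatically enforces the boundary conditions $\partial_h V\subset \partial \mathbb{D}\mathcal{L}(-3)\times \mathbb{D}^2$ and $\partial_v V\subset \mathbb{D}\mathcal{L}(-3)\times \partial \mathbb{D}^2$.

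Finally, for the iso-symplectic statement I would rescale: replacing $\mathbb{D}\mathcal{L}(-3)$ by its disk subbundle of radius $R$ (with respect to a fixed Hermitian metric) scales the induced symplectic area on the embedded surface, so for a suitable $R$ the symplectic area of the embedded flexible fiber matches the area of $F$ under $\omega_V$. Since both source and target sit inside Weinstein structures where the symplectic form is exact, a parametric Moser argument on fibers, carried out relative to $\partial F$, then upgrades the symplectic Lefschetz fibration embedding to an iso-symplectic one. The step I expect to be the main obstacle is Step 3: carefully matching the identifications produced by the flexibility isotopies along the different arcs $\overline{ZZ_i}$ so that everything glues coherently across the annular region while simultaneously respecting the horizontal and vertical boundary stratification of the Weinstein fibration, rather than the Stein or symplectic content of any single step.
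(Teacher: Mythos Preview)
Your approach is essentially the same as the paper's, and the overall structure is correct. Two small corrections are in order, however.

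First, your Step~4 imports too much from the closed case. Over the annulus $\mathbb{D}^2\setminus\mathbb{D}$ the restriction of $\pi_V$ is \emph{not} in general a trivial $F$-bundle: its monodromy is the product $\prod_i\tau_{c_i}$, which has no reason to be isotopic to the identity when the base is $\mathbb{D}^2$ rather than $\C P^1$. Fortunately this does not matter, and no $\pi_1$-argument is needed: once you have a fiber-preserving embedding over the disk $\mathbb{D}$, you simply extend radially over the collar annulus (pull back along the retraction $\mathbb{D}^2\setminus\mathbb{D}\to\partial\mathbb{D}$). This is what the paper means by ``extending fiber-wise via the identity.''

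Second, you invoke the flexibility of Lemma~\ref{lem:flex_embedding_with_boundary} to realize the change of marking, but that lemma is stated for the once-\emph{punctured} surface, whose mapping class group is generated by the Humphries twists $\tau_{a_i},\tau_{b_j},\tau_{c_1},\tau_{c_2}$. The fiber of $\pi_V$ is a surface \emph{with boundary}, whose mapping class group has the extra generator $\tau_d$, and the ambient isotopies of $\mathbb{D}\mathcal{L}(-3)$ coming from its Lefschetz fibration do not conjugate $\tau_d$. The paper handles this with Proposition~\ref{pro:essential_cycles_as_product_of_essential}: any element carrying a non-separating curve to $a_1$ can be replaced, without changing its effect on that curve, by a product of twists along $a_i,b_j,c_1,c_2$ only. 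This is precisely the ingredient you need to justify ``arranged so that $c_i$ is mapped to one of the vanishing cycles,'' and to match the embeddings over the arcs $\overline{ZZ_i}$.

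For the iso-symplectic upgrade the paper appeals to Gompf's construction rather than a fiberwise Moser argument, but your rescaling-plus-Moser outline achieves the same conclusion.
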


We know that the mapping class group of a once
punctured surface is generated by product of Dehn
twists along the curves $a_i, b_j,c_1$ and $c_2.$ For a technical reason --which will become clear while 
going through   the proof of Theorem~\ref{thm:stein_lef_fib_embedding} -- we need 
to show that every Dehn twist along a homologically
essential simple closed curve on a surface with
one boundary component is also isotopic to 
the product of Dehn twist along curves $a_1, b_1,
\cdots, a_{g-1}, b_{g-1}, a_g$, $c_1$, and $c_2$
 We establish this in the following:
 \begin{proposition}\label{pro:essential_cycles_as_product_of_essential}
  Let $(\Sigma_g, \partial \Sigma_g)$ be a surface with one boundary component. Let $\phi$ be an element of the mapping class group 
  that brings a non-separating simple closed curve $c$ to 
  $a_i,$ for some $i,$ then there exists an
  element $\widetilde{\psi}$ of the mapping class group
  of $\Sigma_g$ can be expressed as 
  product of Dehn twists  $\tau_{a_i}, \tau_{a_i}^{-1}$
  $\tau_{b_i}, \tau_{b_i}^{-1},$ $\tau_{c_1}, \tau_{c_1}^{-1},$ $\tau_{c_2},$ and $\tau_{c_2}^{-1},$ and 
  which satisfies the property that $\phi(c) = \widetilde{\phi}(c).$
 \end{proposition}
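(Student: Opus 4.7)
The plan is to reduce to two standard facts about $\mathcal{M}CG(\Sigma_g, \partial\Sigma_g)$ recalled in Section~\ref{sec:mcg_preliminaries}: first, this group is generated by the Humphries Dehn twists $\tau_{a_j}^{\pm 1}, \tau_{b_j}^{\pm 1}, \tau_{c_1}^{\pm 1}, \tau_{c_2}^{\pm 1}$ together with the boundary-parallel Dehn twist $\tau_d$; and second, $\tau_d$ lies in the center of this group, because any mapping class admits a representative that is the identity on a fixed collar of $\partial \Sigma_g$ containing $d$, and $\tau_d$ itself has a representative supported in such a collar.

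First, I would write $\phi$ as a word in these generators and their inverses. Using centrality of $\tau_d$, I would collect all its occurrences on the right to obtain $\phi = \widetilde{\psi} \cdot \tau_d^k$, where $\widetilde{\psi}$ is a product only of Humphries generator Dehn twists and $k \in \mathbb{Z}$.

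Next, I would verify that $\tau_d^k(c) = c$ as an isotopy class. The non-separating hypothesis on $c$ implies that $c$ is not isotopic to the (separating) boundary-parallel curve $d$, so after an isotopy $c$ is disjoint from $d$. One then chooses a representative of $\tau_d$ supported in a thin annular neighborhood of $d$ disjoint from $c$; this representative (and hence its $k$-th iterate) fixes $c$ pointwise. Combining, $\phi(c) = \widetilde{\psi}(\tau_d^k(c)) = \widetilde{\psi}(c)$, which is the desired equality.

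The hypothesis $\phi(c) = a_i$ is not logically needed for the argument above; it only pins down the common value of $\phi(c)$ and $\widetilde{\psi}(c)$ as the specific Humphries curve $a_i$. I do not anticipate a serious obstacle: the whole argument rests on the centrality of $\tau_d$ and the triviality of its action on interior curves, both of which are classical. The only point demanding care is the initial invocation of the presentation of $\mathcal{M}CG(\Sigma_g, \partial \Sigma_g)$ in terms of Humphries generators together with $\tau_d$, which is already recorded in Section~\ref{sec:mcg_preliminaries}.
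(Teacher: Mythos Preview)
Your proposal is correct and follows essentially the same approach as the paper: both write $\phi$ in the Humphries-plus-$\tau_d$ generating set, use that $\tau_d$ commutes with the Humphries twists (the paper phrases this as ``$d$ does not intersect $a_i,b_j,c_1,c_2$'' rather than invoking centrality directly) to collect the $\tau_d$--powers to one side, and then observe that these powers act trivially on $c$ since $c$ can be isotoped off $d$. The only cosmetic difference is that the paper collects $\tau_d$ on the left and then composes with its inverse to define $\widetilde{\phi}$, whereas you collect on the right and read off $\widetilde{\psi}$ immediately; your observation that the hypothesis $\phi(c)=a_i$ is not used in the argument is also accurate.
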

 \begin{proof}

 We know that the mapping class group of a genus $g$ surface
 with one boundary component is generated by Dehn twists
 $\tau_{a_i}, \tau_{a_i}^{-1},$
  $\tau_{b_i}, \tau_{b_i}^{-1},$ $\tau_{c_1}, \tau_{c_1}^{-1},$ $\tau_{c_2},\tau_{c_2}^{-1},$ $\tau_{d},$ and $\tau_{d}^{-1}$
 Since $d$ does not intersect  with $a_i, b_j, c_1$ and $c_2,$ we can assume that when expressing $\phi$ as 
 a product of positive and negative Dehn twists 
 along curves $a_i, b_j,c_1, c_2$ and $d,$ positive and 
 negative Dehn twists along $d$ appear at the very beginning
 of the product. More precisely, $\phi$ can be expressed 
 as:
 
 $$ \tau_{d}^k \cdot \tau_{d}^{-l} \cdot \psi,  $$
 
 \noindent where $\psi$ is a product of positive and negative
 Dehn twists along curves $a_i, b_j, c_1$ and $c_2.$ 
 
 Now define 
 $\widetilde{\phi} = \tau_{d}^l \circ \tau_d^{-k} \circ \phi.$ Since $c$ can be assumed to be disjoint from $d,$
 we get that $\widetilde{\phi} (c) = \phi (c)$ as claimed. 
 
 \end{proof}

\begin{proof}[Proof of Theorem~\ref{thm:stein_lef_fib_embedding}]\mbox{}
We first discuss the case of symplectic embedding
of Stein Lefschetz fibration.
Consider the given (simplified) Stein Lefschetz fibration $\pi:V^4 \rightarrow \mathbb{D}^2.$  Suppose the regular fiber  is a genus $g, g \geq 3,$ surface with
one boundary component.  

Next recall~\cite{OS} that the Stein domain
$\mathbb{D}\mathcal{L}(-3)$ admits a genus $g$ Stein Lefschetz 
fibration $\pi_g: \mathbb{D}\mathcal{L}(-3) \rightarrow \mathbb{D}^2$ for every $g$ having vanishing cycles $c_1,c_2, a_1, b_1, \cdots a_{g-1}, b_{g-1}, a_g.$ 

Fix an identification of a smooth fiber of $\pi:V \rightarrow \mathbb{D}^2$ with a fiber of $\pi_g:\mathbb{D}\mathcal{L}(-3).$ Let us denote by $\Sigma$ the fiber
of $\pi:V \rightarrow \mathbb{D}^2$ and $f$ it's identification with a smooth fiber of $\pi_g.$ Next,
we claim  that we can produce this identification such
that a  vanishing cycle on the fiber of $\pi:V \rightarrow \mathbb{D}$ is mapped to $a_1$ under this identification.

 To begin with pick an identification $\widetilde{f}: \Sigma \rightarrow \pi_g^{-1}(p) \subset \mathbb{D}\mathcal{L}(-3)$
 for some regular value $p$ of $\pi_g.$ Next observe
 that any  vanishing cycle under this identification goes to 
an essential simple closed curve on the fiber of $\pi_g$
as the fibration $\pi:V \rightarrow \mathbb{D}$ is simplified. 
Now, fix a vanishing cycle $c$ on $\Sigma$ and consider 
the essential cycle $\widetilde{f}(c)$ on $\pi_g^{-1}(p).$
Next we note, Proposition~\ref{pro:essential_cycles_as_product_of_essential} implies that there exists a symplectomorphism $\phi:
f(\Sigma) \rightarrow f(\Sigma)$ which send the vanishing
cycle $f(c)$ to $a_1$  which 
is identify when restricted to $\partial f(\Sigma)$ is 
the identity.  Clearly $\phi \circ \widetilde{f}$ is the required identification $f.$

Next let $c_1, c_2, \cdots, c_n$ be critical points,
let $d_i = \pi(c_i)$ be critical values, and   let $\nu_1,\nu_2, \cdots, \nu_n $ be the corresponding
vanishing cycles of the fibration $\pi:V \rightarrow \mathbb{D}.$  It is clear that by an argument similar
to the one used in the proof of~\ref{thm:symp_lef_embedding} we get for each $i, i = 1
$ to $n$,  an embedding $\psi_i$ of $\pi^{-1}(\mathbb{D}_i,$ where $D_i$ is a small disk containing the point $d_i,$ in $\mathcal{D}(E)(-3) \times \mathbb{D}^2$ which satisfy the following:

\begin{enumerate}
 \item Whenever $i \neq j$ $\psi_i (\pi^{-1}(\mathbb{D}_i)) \cap psi_j (\pi^{-1}(\mathbb{D}_j)) = \emptyset,$
 
\item the following diagram Commutes:

\begin{center}
\begin{equation}\label{diag:local_embedding_stein_case}
\begin{tikzcd}
 \pi^{-1}(\mathbb{D}_i) \arrow[hookrightarrow]{r}{\psi_i} \arrow[d, "\pi"] &  \mathbb{D}\mathcal{L}(-3) \times 
 \mathbb{D}_{p_i} \arrow[d, "\pi_2"]  \\
\mathbb{D}_i \arrow[hookrightarrow]{r} & \mathbb{D}. 
\end{tikzcd}
\end{equation}
\end{center}

\item The vanishing cycle $\nu_i$ is mapped to the
vanishing cycle $a_i$ in $\mathbb{D}\mathcal{L}(-3) \times \{t_i\}$
for $\nu_i \in \pi^{-1}(u_i),$ for some $u_i$ in 
$\partial \mathbb{D}_i.$
\end{enumerate}

Next, observe that embeddings $\psi_1$ and $\psi_i$ for
each $i, i = 2, \cdots, n$ gives rise to two embeddings 
of $\Sigma$ in $\mathbb{D}\mathcal{L}(-3)$ via their restrictions
to $\pi^{-1}(\{u_1\})$ and $\pi^{-1}(\{u_i\})$ respectively. Observe that the first embedding is such that the vanishing cycle $\nu_1$ is mapped to $a_1$ while
the second embedding is such that $\nu_i$ is mapped to 
$a_1.$ Observe again that   Proposition~\ref{pro:essential_cycles_as_product_of_essential} and Lemma~\ref{lem:flex_embedding_with_boundary} 
imply  that these two embeddings are isotopic via a family
of symplectomorphims of $\mathbb{D}\mathcal{L}(-3)$ each of 
which is identity when restricted to 
$\partial \mathcal{D}E(-3.)$ This implies we can produce 
an embedding of $V \setminus \pi^{-1}(\mathcal{N}),$
where $\mathcal{N}$ is a regular neighborhood of $D_i$ union with $\overline{u_1 u_i}$ which is fiber preserving. 
Since $\mathcal{N}$ is a disk embedding in $\mathbb{D},$
extending the embedding restricted to $\pi^{-1}(\partial \mathcal{N})$ fiber wise via  identity to $V,$ we get
the required embedding of $V$ in $\mathbb{D}\mathcal{L}(-3) \times \mathbb{D}.$

It remains to show that we can upgrade the symplectic
embedding of iso-symplectic embedding in a trivial Stein
fibration of the form $\mathbb{D}\mathcal{L}(-3) \times
\mathbb{D}^2.$ In light of Theorem~[Theorem:3.1]\cite{Go} due to Gompf, it is sufficient to produce 
an iso-symplectic identification of fiber of the fiber of the fibration $\pi_V: V \rightarrow \mathbb{D}^2$
with the fiber of $\mathbb{D}\mathcal{L}(-3)$ for some
disk bundle $\mathbb{D}\mathcal{L}(-3).$ Furthermore,
notice that any two symplectic form on a compact surface
having same volume are symplectomorphic implies that
we need an identification such that the induced volume via this identification agrees with the given volume of
a smooth fiber of the fibration $\pi_V:V \rightarrow \mathbb{D}.$ But by adjusting the size of the disk bundle with respect to a fixed auxiliary metric,
this is always possible. Hence, we get the
required iso-symplectic embedding.

\end{proof}

\begin{proof}[Proof of Theorem~\ref{thm:weinstein_embedding}]\mbox{}

 Given a Stein $2$--manifold $V$ admitting a plurisubharmonic exhaustive Morse function $v: V \rightarrow \mathbb{R}$
with only finite number of critical points, let 
$t_0 \in \mathbb{R}$ be such that there are no critical 
values that belong to the interval $[t_0, \infty).$
Observe that if we iso-symplectically and properly embed the Stein domain $V^{t_0}
= v^{-1}(-\infty, t_0]$ in the Stein domain $\mathcal{U}^{\beta},$ then we can properly and iso-symplectcially
embed $V$ in $\mathcal{U}.$  Hence
our task is to establish this. 

In order to establish this we first apply Theorem~\ref{thm:giroux_pardon_existence} to produce a Stein Lefschetz fibration of $\pi:V^{t_0} \rightarrow \mathbb{D}^2$ such that the fibers of the Stein fibration are connected. Next, we apply \cite[Proposition:1.5]{LP}
to get a new Stein Lefschetz fibration with connected boundary, connected fiber, and the fibration having  every vanishing cycle essential. That is, we produce a simplified Lefschetz fibration on $V^{t_0}.$

Hence, by Theorem~\ref{thm:stein_lef_fib_embedding} there exits a symplectic Lefschetz fibration embedding of $\pi:V^{t_0} \rightarrow \mathbb{D}$ in the Stein trivial Stein Lefschetz fibration associated to $\mathcal{U}^{\beta}$ as $\mathcal{U}$ is just $\mathcal{L}(-3)
\times \mathbb{C}.$

Finally, observe that applying 
the smoothing of corner operation on the Stein Lefschetz
fibration associated to $\mathcal{U}^{\beta}$ -- due to the facts that (1) the smoothing
operation induces a smoothing operation on $V,$ and (2) the smooth is operation
is canonical up to deformation equivalence -- we get the required symplectic embedding of $V^{t_0}$ in $\mathcal{U}^{\beta}.$

\end{proof}

In the end, we would like to remark that if $W$ is a Weinstein manifold such  that $W$ is obtained from 
the Stein domain $\mathcal{U}^{\beta} \coloneqq f^{-1}\left(\left(-\infty, \beta\right]\right)
\subset \mathcal{U}$  by
attaching finite number of  Weinstein  $2$-handles along the contact boundary of $\mathcal{U}^{\beta}$ as described in~\cite{CE}, then $W$ is also universal.   This is because, for the dimension reasons,  the core of any attached  Weinstein handle can always be 
assumed to be disjoint form $\partial V^{t_0}$ embedded in $\partial 
\mathcal{U}^{\beta}.$

\bibliographystyle{amsplain}

\begin{thebibliography}{100}



\bibitem{AMP04} Amor\'os, Jaume; Mu\~noz, Vicente; Presas, Francisco,
\textit{Generic behavior of asymptotically holomorphic Lefschetz pencils.} 
J. Symplectic Geom. 2 (2004), no. 3, 377–-392. 




\bibitem{Ar} D. Auroux, \textit{Asymptotically holomorphic families of symplectic submanifolds,} Geom. Funct. Anal., Vol. 7, (1997) 971 -- 995.




\bibitem{AK} D. Auroux and L. Katzarkov, 
\textit{A degree doubling formula for braid monodromy and 
Lefschetz pencil}, Pure and applied mathematics quarterly,
Vol. 4, (2008) 237--318.




\bibitem{BO} I. Baykur and S. Osamu,  \textit{Simplified broken Lefschetz fibrations and trisections of 4-manifolds,}  Proc. Natl. Acad. Sci. USA 115,  Vol. 43 (2018), 
10894 -- 10900.






\bibitem{BM} F. Benson and D. Margalit, \textit{A primer on mapping class groups},  Princeton Mathematical series, Vol. 49, Princeton University Press, (2012).
 



\bibitem{CE} K. Ceiliback and Y. Eliashberg, \textit{From Stein to Weinstein and back}, Colloquium Publication,
Vol .59. AMS.





\bibitem{De} M. Dehn, \textit{Die Gruppe der Abbildungsklassen.}
 Acta Math., 69(1), (1938), 135--206.

\bibitem{Do1} S. K. Donaldson, \textit{Symplectic
submanifolds and almost--complex geometry}. Journal
of Differential Geometry, Vol. 44, (1996), 666--705.


\bibitem{Do} S. K. Donaldson, \textit{ Lefschetz   pencils on symplectic manifolds}.
Journal of Differential geometry, Vol. 55 (1999), 205--236.


\bibitem{EL} J. Etnyre and Y. Lekili, \textit{Embedding of 
all contact $3$--manifolds in a fixed contact $5$--manifold}, Journal of LMS, Vol. 99(1), (2018).

\bibitem{Go} R. Gompf, \textit{Topology of Symplectic manifolds}, Turk. Journal of Math., Vol. 25, (2001),
43--59.

\bibitem{Gr} M. Gromov, \textit{Partial differential relations}. A series of modern surveys in mathematics.
Springer. (1986).


\bibitem{GK} D. Gay and R. Kirby, \textit{Constructing Lefschetz type fibration on
$4$--manifolds}, Geometry and Topology, Vol. 11(4), (2005), 2075--2115.

\bibitem{GPan} A. Ghanwat and D. M. Pancholi,\textit{Embeddings of $4$--manifolds in $\mathbb{C}P^3.$}
 arXiv:2002.11299v2 [math.GT] 


\bibitem{GP} E. Giroux and J. Pardon, \textit{Existence of Lefschetz fibrations on Stein and 
Weinstein domains}, Geometry and Topology, Vol. 21(2), 963--997.

\bibitem{GS} R. Gompf and A. I. Stipsicz, \textit{$4$--manifolds and Kirby calculus.}  Graduate Studies in Mathematics, Vol. 20,  American Mathematical Society, Providence, RI, (1999).




\bibitem{Hu} Humpries, \textit{Generators of the mapping class group}. Topology of Low-dimensional
manifolds, Proceedings of the Second Sussex Conference, (1977), 44--47.



%


\bibitem{HY}  S. Hirose and  A. Yasuhara, \textit{Surfaces in 4-manifolds and their mapping class groups.}  Topology, Vol. 47(1) (2008), 41--50.

\bibitem{Ko} K. Kodaria, \textit{On K\"ahler varieties of restricted type (an intrinsic characterization of algebraic varieties)}, Annals of Mathematics, Vol. 60  (1), 28--48.

\bibitem{Li} W.B.R  Lickorish,  \textit{A representation of orientable 
 combinatorial $3$--manifolds}.  Ann. of Math., Vol. 76(2), (1962),  531--540.

\bibitem{Li1} W.B.R Lickorish, \textit{A finite set of generators for the homeotopy
group of a $2$--manifold.} Proc. Cambridge Philos. Soc., Vol.60, (1964), 769--778.

\bibitem{LP} A. Loi and R. Piergallini, \textit{Compact Stein surfaces with boundary as branched covers of $B^4$}, Invent. Math. Vol.143 (2), (2001), 325--348.


\bibitem{Na} J. Nash, The Imbedding Problem for Riemannian Manifolds,
{\it Annals of Mathematics}, Vol. 63 (1), 20--63,  (1956).



%

\bibitem{OS} B.  Ozbagci and  A.  I.  Stipsicz, \textit{Surgery  on  contact  
$3$--manifolds  and  Stein  surfaces,}  Vol. 13  of Bolyai  Society Mathematical Studies. Springer-Verlag, Berlin, 2004.



\bibitem{Wh} H. Whitney, \textit{The self-intersections of a smooth $n$--manifold in $2n$--space,}  Ann. of Math,
vol. 45(2), (1944), 200--246.



%
%
%
%
%
%




\end{thebibliography}

\end{document}